\documentclass[11pt]{article}
\usepackage{microtype}
\usepackage[textwidth=6.2in,textheight=8.5in,centering]{geometry}
\usepackage{graphicx} 
\usepackage{epsfig}
\usepackage{caption}
\usepackage{amsfonts}
\usepackage{amssymb}
\usepackage{amsmath}
\usepackage{amsthm}
\usepackage{needspace}
\usepackage[normalem]{ulem}
\usepackage{enumitem}
\usepackage{graphics}
\usepackage[hidelinks]{hyperref}
\usepackage{float}
\usepackage{pict2e}
\usepackage{cite}
\usepackage{subfigure}
\usepackage[subnum]{cases}
\usepackage{multirow}
\usepackage{comment}
\usepackage{xcolor}
\usepackage{cleveref}
\usepackage{authblk}
\usepackage{tikz}
\usepackage{array} 
\usepackage{booktabs} 
\renewcommand{\arraystretch}{1.5}
\usepackage{color}

\newtheorem{theorem}{Theorem}[section]
\newtheorem{lemma}[theorem]{Lemma}

\newtheorem{claim}{Claim}
\theoremstyle{definition}
\newtheorem{definition}[theorem]{Definition}

\newcommand{\DP}{\mathrm{DP}}

\makeatletter

\title{Degree-truncated DP-colouring of $\{K_4,K_{2,t}\}$-minor-free graphs}

\author[1]{Huan Zhou}

\affil[1]{\small School of Science, Jimei University, Xiamen, China. {Emails: \texttt{huanzhou@zjnu.edu.cn}}}

\begin{document}
\maketitle
\vspace{-2.4em}

\begin{abstract}
A graph $G$ is degree-truncated DP-$k$-colourable
(respectively, degree-truncated $k$-choosable) if it is
DP-$f$-colourable (respectively, $f$-choosable), where
$f(v)=\min\{k,d_G(v)\}$ for every vertex $v$.
Hutchinson proved that every $2$-connected maximal outerplanar
graph other than the triangle is degree-truncated $5$-choosable.
Lo, Wang, Zhou and Zhu strengthened this result by proving that
every $2$-connected $K_{2,4}$-minor-free graph other than cycles
and complete graphs is degree-truncated DP-$5$-colourable.

Outerplanar graphs are precisely the $\{K_4,K_{2,3}\}$-minor-free
graphs. We study how the truncation threshold grows when
$K_{2,3}$ is replaced by $K_{2,t}$, while $K_4$-minor remains excluded.
A bound independent of $t$ is impossible, as shown by the
family $\{K_{2,n}| n\ge 2\}$.
Let $\mathcal S_t$ be the class of $2$-connected
$\{K_4,K_{2,t}\}$-minor-free graphs other than cycles.
Writing $\chi^{\text{\sout{d}}}_{\mathrm{DP}}(\mathcal S_t)$
for the least truncation threshold valid throughout this class,
we prove that, for every integer $t\ge3$,
\[
\left\lfloor2\sqrt{2t}\right\rfloor
\le \chi^{\text{\sout{d}}}_{\mathrm{DP}}(\mathcal S_t)
\le \left\lceil4\sqrt{2t}\right\rceil.
\]
\end{abstract}

\noindent\textbf{Mathematics Subject Classification (2020).}
05C15.

\medskip
\noindent\textbf{Keywords.}
Degree-truncated DP-colouring; DP-colouring; $K_{2,t}$-minor-free graphs; series-parallel graphs.

\section{Introduction}\label{sec:main}

A list assignment $L$ assigns each vertex $v$ a set $L(v)$ of colours. 
A graph is \emph{$k$-choosable} if every such assignment with
$|L(v)|\ge k$ for all $v$ admits a proper colouring $c$ satisfying
$c(v)\in L(v)$ for all $v$.
Here a colouring is proper if adjacent vertices receive different colours.
A greedy argument shows that a graph $G$ can be coloured from any
lists $L(v)$ satisfying $|L(v)|\ge d_G(v)+1$ for every vertex $v$.
A graph is \emph{degree-choosable} if it can be coloured from every
list assignment with $|L(v)|\ge d_G(v)$ at every vertex.
Borodin~\cite{Borodin} and, independently,
Erd\H{o}s, Rubin and Taylor~\cite{ERT} proved that a connected graph
is degree-choosable if and only if it is not a \emph{Gallai-tree},
that is, a connected graph whose blocks are complete graphs or odd
cycles.

Degree-truncated choosability asks whether the lists at vertices of
large degree can be reduced to a fixed size.
Such bounds could be useful in inductive colouring arguments.
After part of a graph is coloured, a vertex with few uncoloured
neighbours may have lost many colours to its coloured neighbours,
while a vertex with many uncoloured neighbours may still have  a large number of colours available.
A graph $G$ is \emph{degree-truncated $k$-choosable} if it can be coloured from every
list assignment satisfying
\[
 |L(v)|\ge\min\{k,d_G(v)\}\qquad \text{for each vertex } v\in V(G).
\]
Richter asked whether every $3$-connected non-complete planar graph
is degree-truncated $6$-choosable; see~\cite{Hutchinson}.
Degree-truncated choosability of planar graphs has been studied
further in~\cite{ZZZ,JXXZ,XZZZ}. The least integer $k$ such that
every $3$-connected non-complete planar graph is degree-truncated
$k$-choosable is now known to be either $10$ or $11$~\cite{XZZZ}.

Motivated by Richter's question, Hutchinson~\cite{Hutchinson} proved that
every $2$-connected maximal outerplanar graph other than the triangle
is degree-truncated $5$-choosable.  Lo, Wang, Zhou and Zhu~\cite{LWWZ} strengthened Hutchinson's result  to a larger graph class and to DP-colouring. 
They proved that every $2$-connected $K_{2,4}$-minor-free graph other than cycles and complete graphs is degree-truncated DP-$5$-colourable.  
The outerplanar case was also proved by Dai, Hu, Li and
Maezawa~\cite{DHLM}: every $2$-connected outerplanar
graph other than a cycle is degree-truncated DP-$5$-colourable.

DP-colouring (also called correspondence colouring) was introduced
by Dvo\v{r}\'{a}k and Postle~\cite{DP}. A \emph{DP-cover} of $G$
is a pair $(L,M)$. Here $L$ assigns pairwise disjoint finite sets
$L(v)$ of colours to the vertices of $G$, and $M=(M_{uv}:uv\in E(G))$
assigns a matching $ M_{uv}\subseteq L(u)\times L(v)$ to each edge $uv\in E(G)$.

An \emph{$(L,M)$-colouring} chooses one colour at each vertex so that
\[
 c(v)\in L(v)\quad(v\in V(G)),\qquad
 (c(u),c(v))\notin M_{uv}\quad(uv\in E(G)).
\]
An DP-cover $(L,M)$ of $G$ is viewed as a graph with vertex set $\cup_{v \in V(G)} L(v)$ and edge set $\cup_{uv \in E(G)}M_{uv}$.
An \emph{ $(L,M)$-colouring} of $G$ is an independent set of $(L,M)$ meeting each $L(v)$
in exactly one vertex.
List colouring is the special case in which colours in different
lists have labels, and each edge matches exactly the colours with
a common label in its endpoint lists.

For $f:V(G)\to\mathbb Z_{\ge0}$, a DP-cover $(L,M)$ is an
\emph{$f$-cover} if $|L(v)|\ge f(v)$ for every vertex $v$.
The graph $G$ is \emph{DP-$f$-colourable} if every $f$-cover
admits an $(L,M)$-colouring. It is
\emph{degree-truncated DP-$k$-colourable} if this holds for $f(v)=\min\{k,d_G(v)\}$.

Write $\chi^{\text{\sout{$\mathrm{d}$}}}_{\DP}(G)$ for the least
$k$ such that $G$ is degree-truncated DP-$k$-colourable, taking the
value $\infty$ if no such $k$ exists. For a graph class $\mathcal G$,
set
\[
 \chi^{\text{\sout{$\mathrm{d}$}}}_{\DP}(\mathcal G)
 =\sup_{G\in\mathcal G}\chi^{\text{\sout{$\mathrm{d}$}}}_{\DP}(G).
\]

A  connected
graph is DP-$f$-colourable for $f(v)=d_G(v)$ if and only if it is not a
\emph{GDP-tree}, whose blocks are complete graphs or cycles~\cite{BKP,KO}.
Thus GDP-trees have no finite degree-truncated DP threshold.

Excluding GDP-trees alone does not yield a uniform bound on the
degree-truncated DP-chromatic number, even for outerplanar graphs.
For any $k\ge2$, attach $k$ leaves to one vertex of a connected
outerplanar graph that is not a GDP-tree. The resulting graph is not degree-truncated DP-$k$-colourable. To exclude such graphs,  we restrict to  $2$-connected graphs.

For $k\ge2$, give the two vertices of degree $k^2$ in $K_{2,k^2}$ disjoint lists $\{a_1,\ldots,a_k\}$ and $\{b_1,\ldots,b_k\}$. Give its other vertices the lists $\{a_i,b_j\}$, one for each $1\le i,j\le k$. Every choice of colours at the two high-degree vertices leaves a common neighbour with no available colour. Hence $K_{2,k^2}$ is not degree-truncated $k$-choosable, and hence not degree-truncated DP-$k$-colourable.
These graphs are $2$-connected and $K_4$-minor-free, so there is not constant upper bound for the degree-truncated choice number of  
$K_4$-minor graphs.  
We therefore also exclude $K_{2,t}$-minor. For $t\ge3$, let $\mathcal S_t$
be the class of $2$-connected $\{K_4,K_{2,t}\}$-minor-free graphs
other than cycles. Among $2$-connected $K_4$-minor-free graphs,
GDP-trees are precisely cycles.

Our main result determines the optimal order of
$\chi^{\text{\sout{$\mathrm{d}$}}}_{\DP}(\mathcal S_t)$ as a function
of $t$.

\begin{theorem}\label{thm:main}
For every integer $t\ge3$,
\[
 \lfloor2\sqrt{2t}\rfloor
 \le\chi^{\text{\sout{$\mathrm{d}$}}}_{\DP}(\mathcal S_t)
 \le\lceil4\sqrt{2t}\rceil.
\]
In particular, $\chi^{\text{\sout{$\mathrm{d}$}}}_{\DP}(\mathcal S_t)
=\Theta(\sqrt t)$.
\end{theorem}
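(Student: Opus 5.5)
The proof splits into a lower-bound construction and an upper bound by induction on a reducible-configuration analysis of series--parallel graphs.

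\emph{Lower bound.} I would generalize the $K_{2,k^2}$ example so that it stays $K_{2,t}$-minor-free. Put $k=\lfloor 2\sqrt{2t}\rfloor-1$; it suffices to exhibit a graph in $\mathcal S_t$ that is not degree-truncated DP-$k$-colourable. Take two poles $x,y$ and join them by internally disjoint paths. A vertex of degree $2$ has list size $2$, so in the DP setting a path of length $2$ (one middle vertex $w$) with $|L(w)|=2$ forbids one pair $(c(x),c(y))$. A longer gadget, for example a thread $x w_1 w_2 y$ with $|L(w_i)|=2$, can be made to forbid several pairs: by choosing the matchings suitably, a $2$-path (two internal vertices) forbids a pair set that is a ``cross'' or a permutation-type set of size up to about $k$. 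Each such path contributes only $1$ to the number of internally disjoint $x$--$y$ paths. A $K_{2,t}$ minor with branch sets containing $x$ and $y$ can use at most one path per thread, while minors not using both poles are small. We need to cover all $k^2$ pairs with about $t$ threads. If one gadget kills $j$ pairs, then pairs grow like $\sum j$ and degree like $\sum 1$, with $|L(x)|=k$ forced by degree $\ge k$. The target count $k^2/8\approx t$ suggests that each gadget kills about $8$ pairs on average, or rather that DP matchings let one thread kill all pairs $(a_i,b_{\sigma(i)})$ for a permutation $\sigma$, in which case $k$ threads suffice. The relation $t\approx k^2/8$ then must come from minor-counting: the number of disjoint paths between $x$ and $y$ is limited, and additional structure such as fans is needed to beat trivial bounds. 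I would optimize the gadget so that a graph with $\binom{k}{2}$-type counting gives $\lfloor 2\sqrt{2t}\rfloor$, and check $2$-connectivity and $K_4$-freeness, which is automatic for parallel compositions of paths.

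\emph{Upper bound.} Let $k=\lceil 4\sqrt{2t}\rceil$. I would take a minimal counterexample $G\in\mathcal S_t$ with cover $(L,M)$. The standard reductions apply: no two adjacent degree-$2$ vertices lying on a thread whose colouring extends greedily, and no vertex whose degree exceeds its list size in a way that would let it be coloured last. In a $2$-connected $K_4$-minor-free graph, use the ear/SP-decomposition to find a ``leaf'' structure, namely a pair $x,y$ with many $x$--$y$ parallel threads of length $2$ or $3$ (series--parallel graphs have such configurations). $K_{2,t}$-minor-freeness bounds the number of parallel threads between any pair by $t-1$, and bounds the total count in nested configurations.

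\emph{Key counting step, and main obstacle.} Delete the internal vertices of these threads and colour the rest by minimality. This needs care so that the remainder stays in $\mathcal S_t$ or is handled as a path or cycle case. Then recolour $x$ and $y$ from their $\ge$ (residual) lists. Each length-$2$ thread forbids at most one pair $(c(x),c(y))$, and each longer thread forbids only a few, while there are about $k^2/4$ residual pair choices once neighbours outside are accounted for. For $t$ threads we need $k^2/4>2t$-ish, which $k=4\sqrt{2t}$ provides with slack for the lost colours. The main obstacle is controlling the colours lost at $x,y$ to their outside neighbours when $\deg x$ is large but capped at $k$. I would first try to choose the configuration so that $x$ or $y$ has most of its neighbours inside it, using an extremal ``innermost'' SP-component, falling back on Hall-type counting over all threads.
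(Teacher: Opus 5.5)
\textbf{Lower bound.} Your construction rests on a false claim about threads. Take a thread $x w_1\cdots w_m y$ whose internal vertices have lists of size $2$. Only the at most two colours of $L(x)$ matched into $L(w_1)$ can leave $w_1$ with a single admissible colour. If some $w_i$ has two admissible colours, then so does $w_{i+1}$. Hence each colour at $x$ leaves at most one admissible colour at $w_m$, and is forbidden together with at most one colour at $y$. So every thread forbids at most two pairs, and these form a matching. No thread forbids a ``cross'' or permutation-type set; only the edge $xy$ itself gives a matching of size $k$.

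A two-pole construction must kill all $k^2$ pairs at the same pair $x,y$. With at most $t-1$ threads this gives only $k^2\le k+2(t-1)$, i.e.\ $k\approx\sqrt{2t}$. The paper's gadget $D(u,v)$ does better: its middle vertex $z$ has degree and list size $4$, so one gadget forbids a full prescribed $K_{2,2}$ at the cost of one unit of multiplicity. Even so, two poles reach only about $2\sqrt t$, a factor $\sqrt2$ short of the target.

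The missing idea is to spread the forbidden pairs over many different vertex pairs. Along a path $v_0\cdots v_k$, each edge only has to forbid $[m]\times\{m,\dots,k\}$, so the admissible sets shrink as $[k]\supset[k-1]\supset\cdots\supset\varnothing$. By the rectangle-covering argument this needs only $\lceil((k+1)^2-4)/16\rceil$ gadgets per edge plus the edge matching. An apex with $k+1$ colours, and one segment per apex colour, restores $2$-connectivity. A layering argument (each $H_j$ is outerplanar) then shows the graph is $K_{2,2r+1}$-minor-free. It is exactly this accounting of multiplicity $r$ per edge against $t=2r+1$ that yields $2\sqrt{2t}$. You leave the actual construction open (``I would optimize the gadget\dots''), so this half is not proved.

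\textbf{Upper bound.} Your sketch does not identify a configuration that is guaranteed to exist and be reducible, and the obstacle you flag at the end is the whole difficulty. A $2$-connected $K_4$-minor-free graph need not contain a pair joined by several parallel threads. For example, in a maximal outerplanar graph on at least five vertices, no two degree-two vertices share their pair of neighbours.

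The basic case is a single degree-two vertex $z$ whose neighbours $x,y$ have large degree. If you delete $z$ and colour the rest, nothing stops the colours at $x,y$ from killing both colours at $z$. If $d(x)>k$, the coloured neighbours of $x$ can even exhaust $L(x)$, so ``recolour $x$ and $y$'' is not available. The paper never colours $x,y$ last. Instead it records the non-extendable pairs as a new constraint between $L(x)$ and $L(y)$. This forces a strengthened induction hypothesis (Theorem~\ref{thm:valid}) with:
\begin{itemize}
\item small relations, i.e.\ subgraphs of $K_{2,2}$;
\item forbidden-colour sets $A_{xy}$;
\item endpoint weights and path multiplicities $\mu_{xy}$, with the list condition measured against total weight rather than degree;
\item a cycle case with $\varepsilon=1$, to cover reductions that produce a cycle.
\end{itemize}

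Two further ingredients are absent from your plan. First, $K_{2,2r+2}$-minor-freeness allows at most one pair of multiplicity greater than $r$. Together with Lemma~\ref{lem:degree-two}, this gives a reducible $z$ whose two incident pairs both have multiplicity at most $r$. Second, colours with more than $2\sqrt r$ neighbours in the small relations are moved into the forbidden sets. Then every remaining choice at $x$ or $y$ forbids at most $4\sqrt r-1$ colours of $L(z)$. This is where $k=\lceil8\sqrt r\rceil\le\lceil4\sqrt{2t}\rceil$ comes from; your heuristic count ``$k^2/4>2t$'' does not substitute for it.
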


In fact, we prove the following sharper bounds.

\begin{theorem}\label{thm:main-precise}
For every integer $t\ge3$, with $r=\lfloor(t-1)/2\rfloor$,
\[
 1+\lfloor\sqrt{16r+4}\rfloor
 \le\chi^{\text{\sout{$\mathrm{d}$}}}_{\DP}(\mathcal S_t)
 \le \lceil8\sqrt r\rceil.
\]
\end{theorem}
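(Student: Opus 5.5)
The statement has two halves: a lower-bound construction and an upper-bound colouring argument. For the lower bound, with $k=\lfloor\sqrt{16r+4}\rfloor$, I will exhibit a graph $G\in\mathcal S_t$ and a cover with $|L(v)|\ge\min\{k,d_G(v)\}$ that has no colouring. The model is the $K_{2,k^2}$ example above, but $K_{2,k^2}$ itself contains $K_{2,t}$ when $k^2\ge t$. I will therefore replace each common neighbour of the two hubs $x,y$ by a short path (a parallel ``thread'' of length $2$ or $3$ between $x$ and $y$) and use DP-matchings rather than lists. A $2$-connected $K_4$-minor-free graph built from internally disjoint $x$--$y$ paths is $K_{2,t}$-minor-free exactly when the number of paths is at most $t-1$. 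More generally I will use threads that are themselves small series-parallel gadgets, with at most $r=\lfloor (t-1)/2\rfloor$ of them, each contributing $2$ to the $K_{2,t}$ count. So the hubs have degree about $2r$ or $4r$, and truncation at $k$ gives them lists of size $k$. Each gadget, for example a $4$-cycle through $x$ and $y$ or a path of degree-$2$ vertices, has degree-$2$ internal vertices with lists of size $2$. The DP-matchings on a gadget can be chosen to forbid a set of pairs $(a_i,b_j)$ of size roughly $4$ or $8$ per gadget. A count of roughly $k^2\le 16r+4$ forbidden pairs then covers all of $L(x)\times L(y)$. The extra $+1$ will come from one additional vertex or edge that kills one more pair or raises a hub degree. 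I will fix the gadget by checking which pair-sets a path of length $3$ with lists of size $2$ can forbid in DP: a path $x u v y$ can forbid $2\times 2$ rectangle-like configurations.

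For the upper bound, I will take a minimal counterexample $G\in\mathcal S_t$ with $k=\lceil 8\sqrt r\rceil$ and a bad $f$-cover. I will use the structure of $2$-connected series-parallel graphs, namely an ear or SP-decomposition, together with $K_{2,t}$-freeness. Any two vertices $x,y$ are joined by at most $t-1$ internally disjoint paths, with a suitable adaptation for nonadjacent pairs. Vertices of degree below $k$ have lists of size equal to their degree, so by the Gallai-tree/GDP-tree theorem \cite{BKP,KO} any subgraph consisting of low-degree vertices can be coloured last unless it is a GDP-tree. The reducible configurations will therefore be low-degree structures hanging between few high-degree vertices.

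The key lemma is that a $2$-connected series-parallel graph always contains either a thread, meaning a long path of degree-$2$ vertices, or a vertex pair $x,y$ with many ``parallel pieces'' between them. By $K_{2,t}$-freeness there are at most $t-1\approx 2r$ such pieces. After colouring $G$ minus the pieces, each piece forbids only a few colour pairs at $(x,y)$, say at most $4$ per piece via degree-$2$ vertices. If $x$ and $y$ have $k$ colours each but are uncoloured, we can choose $(c(x),c(y))$ avoiding at most $O(r)$ forbidden pairs. This requires $k^2$ to exceed the number of forbidden pairs plus the losses to other neighbours of $x$ and $y$, giving a bound like $k^2>64r$, hence $k=\lceil 8\sqrt r\rceil$.

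The main obstacle is the discharging or structural step that guarantees such a pair $x,y$ in which the other neighbours of $x$ and $y$ outside the pieces remove few colours, so that $x$ and $y$ each retain about $k/2$ colours. I would first try a leaf-block argument on the SP-decomposition tree to obtain this, taking a deepest parallel node whose children are all small.
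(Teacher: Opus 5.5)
Both halves have genuine gaps.

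\emph{Lower bound.} Your two-hub plan cannot reach $k=\lfloor\sqrt{16r+4}\rfloor$, and your own count already shows this: at most $r$ gadgets forbidding ``roughly $4$ or $8$'' pairs each give at most $8r$ pairs, not the $k^2\approx16r$ you need.

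The gadget claim is also false. For a thread $xuvy$ with $|L(u)|=|L(v)|=2$, a pair $(a,b)$ fails only if three things hold: $a$ kills a colour of $u$, $b$ kills a colour of $v$, and the two surviving colours are matched on $uv$. So the failing pairs form a matching of size at most $2$, no better than a two-edge thread.

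More fundamentally, suppose every non-hub vertex has degree below $k$, as in all your gadgets. Delete these vertices one at a time as in Case~1 of Claim~\ref{clm:replacement}. What remains between $x$ and $y$ is only the matching on $xy$ and some small relations (subgraphs of $K_{2,2}$). Their number is the final multiplicity $\mu_{xy}$, i.e.\ the number of common neighbours of $x,y$ in a minor of $G$, so it is at most $t-1\le 2r+1$. Hence at most $4(t-1)+k$ pairs of $L(x)\times L(y)$ are ever forbidden. This is less than $k^2$ for every $t\ne4$. For $t=4$, equality would require $K_{4,4}$ to split into a perfect matching and three edge-disjoint $K_{2,2}$'s, which is impossible because each colour would lie in $3/2$ of them.

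The missing ideas are those of Section~\ref{sec:lower}:
\begin{itemize}
\item Truncated intermediate vertices: a fan whose core lists have $k$ colours, where each colour at the hub disables one of $k$ path segments. Then each path edge only has to forbid a block $K_{m,n}$ with $m+n=k$, which is at most about $k^2/4$ pairs rather than $k^2$.
\item The gadget $D(u,v)$ of Lemma~\ref{lem:D}, whose degree-$4$ middle vertex realises a whole $K_{2,2}$ at unit cost.
\item The covering of such a block by $\lceil(k^2-4)/16\rceil\le r$ small rectangles plus a matching.
\item The minor argument showing that $r$ copies on each path edge keep $F_{r,N}$ free of $K_{2,2r+1}$.
\end{itemize}

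\emph{Upper bound.} The step you call the main obstacle is the entire proof, and the claims around it fail.

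Colouring $G$ minus the pieces first gives no control at $x,y$. A hub with $|L(x)|=k$ can have arbitrarily many neighbours outside the pieces, and they may kill all of $L(x)$, so nothing guarantees $k/2$ surviving colours.

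A piece forbids only $O(1)$ pairs only if it has no truncated vertex. Take a truncated $w$ adjacent to $y$ and joined to $x$ by copies of $D(x,w)$ covering $\{a_1,a_2\}\times L(w)$. It forbids the entire rows $\{a_1\}\times L(y)$ and $\{a_2\}\times L(y)$. Truncated inner vertices are also exactly what any lower bound beyond about $8r$ pairs must use.

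The paper never colours first. It works as follows.
\begin{itemize}
\item It removes a vertex $z$ of degree two in the auxiliary graph $H$ on the uncoloured set $S$, and records the non-extendable pairs at its neighbours $x,y$ as a new constraint built from small relations and forbidden-colour sets $A_{xy}$.
\item The total weight at each vertex never increases. So $|L(x)|\ge\min\{k,\sum_y w_{xy}(x)\}$ persists, and $|A_{xy}(v)|\le2(\mu_{xy}-q_{xy})/\sqrt r$ is maintained.
\item Colours with more than $2\sqrt r$ neighbours in the small relations are forbidden outright.
\item At most one pair has multiplicity above $r$, since two such pairs give a $K_{2,2r+2}$ minor. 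Lemma~\ref{lem:degree-two} lets $z$ avoid that pair.
\end{itemize}
Consequently each of $c(x),c(y)$ kills at most $4\sqrt r-1$ colours at a truncated $z$, and $8\sqrt r-2<k$. This, not a pair count like $k^2>64r$, is where $\lceil8\sqrt r\rceil$ comes from.

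Finally, the minimal-counterexample induction only closes for the strengthened Theorem~\ref{thm:valid}, which also admits cycles carrying a constraint with $\varepsilon=1$. This is needed because the reductions create such cycles and non-matching constraints. Your sketch has no induction hypothesis that survives these reductions.
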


Theorem~\ref{thm:main} follows from these bounds: for $t\ge5$,
$\sqrt{8t}\le\sqrt{16r+16}<1+\sqrt{16r+4}$, and taking floors
gives its lower bound. The cases $t=3,4$ follow directly, and
its upper bound follows from $2r<t$.

For $t=3,4$, the lower bound in Theorem~\ref{thm:main-precise}
and the theorem of Lo, Wang, Zhou and Zhu~\cite{LWWZ}
give $\chi^{\text{\sout{$\mathrm{d}$}}}_{\DP}(\mathcal S_t)=5$.
Table~\ref{tab:small-values} compares these exact values with the
general bounds.

\begin{table}[H]
\centering
\renewcommand{\arraystretch}{1.1}
\begin{tabular}{@{}cccc@{}}
\toprule
$t$ & Exact value & Theorem~\ref{thm:main} & Theorem~\ref{thm:main-precise} \\
\midrule
$3$ & $5$ & $[4,10]$ & $[5,8]$ \\
$4$ & $5$ & $[5,12]$ & $[5,8]$ \\
\bottomrule
\end{tabular}
\caption{Exact thresholds and intervals given by the general bounds for $t=3,4$.}
\label{tab:small-values}
\end{table}

Zhou, Zhu and Zhu~\cite{ZZZ} proved   general bounds 
for degree-truncated DP-colouring in proper minor-closed families
under suitable connectivity assumptions. In particular, for each
fixed $t$, their theorem gives a uniform threshold for $2$-connected
$K_{2,t}$-minor-free graphs that are not GDP-trees.
For $\mathcal S_t$, setting $s=2$ in their
proof~\cite{ZZZ} gives
\[
 \chi^{\text{\sout{$\mathrm{d}$}}}_{\DP}(\mathcal S_t)
 \le 4096t^2(t+1)+16t=O(t^3).
\]
For the $K_4$-minor-free subclass $\mathcal S_t$,
Theorem~\ref{thm:main} improves this cubic estimate to a square-root
bound and shows that this order is best possible.

Our upper-bound proof follows the method from our earlier joint
work with Lo, Wang and Zhu~\cite{LWWZ}. We take
advantage of the graph representation of a DP-cover: edges between lists in the DP-cover represent forbidden
colour pairs. A colouring is therefore an independent set
containing one colour from each list. During the induction, we
delete a vertex with two neighbours in the reduced graph and
add edges between their lists for colour pairs that cannot
extend to the deleted vertex.

For a degree-two vertex with a two-colour list in a DP-cover, these
forbidden pairs form a subgraph of $K_{2,2}$. This observation
leads to constraints built from matchings and such small
subgraphs. We use weight of vertices to bound how many colours a choice at  its neighbour vertex can forbid.  We extend this approach by introducing
sets $A_{xy}(x)$ that record forbidden colours as part of the
constraints. The exclusion of a $K_{2,t}$ minor bounds the number of internally vertex-disjoint paths between any two vertices in terms of $t$.
We refer to these path counts as \emph{path multiplicities}.
These multiplicities control both the sizes of the forbidden-colour sets
and the number of small subgraphs.

\section{A square-root lower bound}\label{sec:lower}

In DP-cover constructions, we use local colour labels.
Labelling $L(v)$ by a set $A$ means taking
$L(v)=\{(v,a):a\in A\}$; we write $a$ for $(v,a)$ when the
vertex is clear. Thus equal labels in different lists denote
distinct colours. Matching equal labels on $uv$ means matching
$(u,a)$ with $(v,a)$ for each common label $a$.

Let $D(u,v)$ be the graph in Figure~\ref{fig:D-cover}(a). We call
$u,v$ its \emph{terminals} and $p,z,q$ its \emph{internal vertices}.

\begin{figure}[H]
\centering
\begin{minipage}[c]{.36\textwidth}
\centering
\begin{tikzpicture}[x=11mm,y=11mm,
 every node/.style={font=\small}]
\coordinate (u) at (-2,0);
\coordinate (p) at (-1,1.3);
\coordinate (z) at (0,0);
\coordinate (q) at (1,1.3);
\coordinate (v) at (2,0);
\draw (u)--(p)--(z)--(u) (v)--(q)--(z)--(v);
\foreach \w in {u,p,z,q,v} {\fill (\w) circle (1.5pt);}
\node[left] at (u) {$u$};
\node[right] at (v) {$v$};
\node[above] at (p) {$p$};
\node[below] at (z) {$z$};
\node[above] at (q) {$q$};
\end{tikzpicture}
\par\smallskip\small (a) $D(u,v)$
\end{minipage}\hfill
\begin{minipage}[c]{.61\textwidth}
\centering
\begin{tikzpicture}[x=9.5mm,y=7.8mm,
 every node/.style={font=\scriptsize},
 list/.style={draw=black!65,fill=black!3,line width=.45pt},
 matching/.style={line width=.55pt}]
\draw[list] (-3.45,.10) ellipse (.90 and .82);
\draw[list] (-1.85,2.20) ellipse (.65 and .84);
\draw[list] (0,.10) ellipse (1.55 and 1.00);
\draw[list] (1.85,2.20) ellipse (.65 and .84);
\draw[list] (3.45,.10) ellipse (.90 and .82);
\node at (-3.45,-.97) {$L(u)$};
\node at (-1.85,3.27) {$L(p)$};
\node at (0,-1.15) {$L(z)$};
\node at (1.85,3.27) {$L(q)$};
\node at (3.45,-.97) {$L(v)$};
\node at (-3.45,-.46) {$\cdots$};
\node at (3.45,-.46) {$\cdots$};
\coordinate (u1) at (-3.8,0);
\coordinate (u2) at (-3.1,.38);
\coordinate (p1) at (-2,2.6);
\coordinate (p2) at (-2,1.75);
\coordinate (za1) at (-.45,0);
\coordinate (za2) at (-1.05,.38);
\coordinate (zb1) at (.45,0);
\coordinate (zb2) at (1.05,.38);
\coordinate (q1) at (2,2.6);
\coordinate (q2) at (2,1.75);
\coordinate (v1) at (3.8,0);
\coordinate (v2) at (3.1,.38);
\draw[matching] (u1)--(p1)--(za1)--cycle;
\draw[matching] (u2)--(p2)--(za2)--cycle;
\draw[matching] (v1)--(q1)--(zb1)--cycle;
\draw[matching] (v2)--(q2)--(zb2)--cycle;
\foreach \w in {u1,u2,p1,p2,za1,za2,zb1,zb2,q1,q2,v1,v2}
 {\fill (\w) circle (1.35pt);}
\node[below=1pt] at (u1) {$a_1$};
\node[above=1pt] at (u2) {$a_2$};
\node[right=1pt] at (p1) {$\alpha_1$};
\node[right=1pt] at (p2) {$\alpha_2$};
\node[below=1pt] at (za1) {$\alpha_1$};
\node[above=2pt,fill=black!3,inner sep=.5pt] at (za2) {$\alpha_2$};
\node[below=1pt] at (zb1) {$\beta_1$};
\node[above=2pt,fill=black!3,inner sep=.5pt] at (zb2) {$\beta_2$};
\node[left=1pt] at (q1) {$\beta_1$};
\node[left=1pt] at (q2) {$\beta_2$};
\node[below=1pt] at (v1) {$b_1$};
\node[above=1pt] at (v2) {$b_2$};
\end{tikzpicture}
\par\smallskip\small (b) A DP-cover of $D(u,v)$
\end{minipage}
\caption{The graph $D(u,v)$ and the DP-cover used in
Lemma~\ref{lem:D}. Only edges between different lists are shown
in~(b).}
\label{fig:D-cover}\label{fig:lower-D}
\end{figure}

\begin{lemma}\label{lem:D}
Let $L$ assign pairwise disjoint lists to the vertices of $D(u,v)$,
with $|L(w)|=d_{D(u,v)}(w)$ for every internal vertex $w$.
For any $A\subseteq L(u)$ and $B\subseteq L(v)$ with
$|A|=|B|=2$, there is a DP-cover $(L,M)$ of $D(u,v)$ such that
no choice of colours from $A$ at $u$ and $B$ at $v$ extends to
an $(L,M)$-colouring.
\end{lemma}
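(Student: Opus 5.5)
Here $D(u,v)$ has edges $up,uz,pz,zv,zq,qv$. So $d(p)=d(q)=2$ and $d(z)=4$, giving $|L(p)|=|L(q)|=2$ and $|L(z)|=4$. The plan is the explicit cover of Figure~\ref{fig:D-cover}(b), followed by a direct check.

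\emph{The cover.} Write $A=\{a_1,a_2\}$, $B=\{b_1,b_2\}$, $L(p)=\{\alpha_1',\alpha_2'\}$, $L(q)=\{\beta_1',\beta_2'\}$ and $L(z)=\{\alpha_1,\alpha_2,\beta_1,\beta_2\}$. For $i=1,2$ we put three matchings:
\begin{itemize}
\item $M_{up}$ matches $a_i$ with $\alpha_i'$, $M_{uz}$ matches $a_i$ with $\alpha_i$, and $M_{pz}$ matches $\alpha_i'$ with $\alpha_i$;
\item symmetrically, $M_{vq}$ matches $b_i$ with $\beta_i'$, $M_{vz}$ matches $b_i$ with $\beta_i$, and $M_{qz}$ matches $\beta_i'$ with $\beta_i$.
\end{itemize}
Each of these is a matching, so $(L,M)$ is a DP-cover; colours of $L(u)\setminus A$ and $L(v)\setminus B$ are left unmatched. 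Thus the cover is a union of four triangles, one for each pair $(u,i)$ and $(v,i)$, exactly as drawn.

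\emph{Verification.} Fix $c(u)=a_i\in A$ and $c(v)=b_j\in B$, and suppose this extends to a colouring.
\begin{enumerate}
\item At $z$, $a_i$ forbids $\alpha_i$ and $b_j$ forbids $\beta_j$, so $c(z)\in\{\alpha_{3-i},\beta_{3-j}\}$.
\item If $c(z)=\alpha_{3-i}$, then $p$ loses $\alpha_i'$ to $u$ (via $M_{up}$) and $\alpha_{3-i}'$ to $z$ (via $M_{pz}$). So $p$ has no colour.
\item If $c(z)=\beta_{3-j}$, the symmetric argument shows that $q$ has no colour.
\end{enumerate}
Either way there is a contradiction, which proves the lemma.

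The key design point is that the $2$-list at $p$ is fully blocked by the pair $\{u,z\}$ whenever $z$ uses an $\alpha$-colour. This requires the triangle alignment: the colour $a_i$ is tied to index $i$ at both $p$ and $z$. Once this is set up, the proof is routine.
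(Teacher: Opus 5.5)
Your proof is correct and uses exactly the paper's cover, the four aligned triangles. The only difference is the order of the check: the paper first forces the colours at $p$ and $q$ (so $u,p$ forbid both $\alpha$-colours and $v,q$ forbid both $\beta$-colours at $z$), whereas you branch on the two surviving options at $z$ and show that each one empties the list at $p$ or at $q$; this is equivalent.
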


\begin{proof}
Write $A=\{a_1,a_2\}$ and $B=\{b_1,b_2\}$. Construct the cover
shown in Figure~\ref{fig:D-cover}(b) as follows. Label the colours
in $L(p)$ by $\alpha_1,\alpha_2$, those in $L(q)$ by
$\beta_1,\beta_2$, and those in $L(z)$ by
$\alpha_1,\alpha_2,\beta_1,\beta_2$. For $i=1,2$, match $a_i$
to $\alpha_i$ on each of $up,uz$, and match the two copies of
$\alpha_i$ on $pz$. Similarly, match $b_i$ to $\beta_i$ on each
of $vq,vz$, and match the two copies of $\beta_i$ on $qz$.
Leave all other pairs unmatched. The internal list sizes are
$2,4,2$, as required.

If $u$ receives $a_i$, then $p$ must receive $\alpha_{3-i}$.
The choices at $u$ and $p$ therefore forbid $\alpha_i$ and
$\alpha_{3-i}$ at $z$, respectively. Similarly, a choice from $B$
at $v$ forces a colour at $q$ and forbids both $\beta$ colours at
$z$. Thus no colour remains at $z$.
\end{proof}

We attach copies of $D$ to a fan to construct the graph for the
lower bound. For integers $r\ge1$ and $N\ge2$, start with a path
$v_0v_1\cdots v_N$ and a vertex $x$ adjacent to every path vertex.
For each $i\in\{0,\ldots,N-1\}$, attach $r$ copies of
$D(v_i,v_{i+1})$ with pairwise disjoint interiors, retaining all
edges of the fan. Denote the resulting graph by $F_{r,N}$, and
call $x,v_0,\ldots,v_N$ its \emph{core vertices}. We call the graph induced by core vertices the fan of $F_{r,N}$.
Figure~\ref{fig:lower-fan} shows $F_{2,4}$.

\begin{figure}[htbp]
\centering
\begin{tikzpicture}[x=6mm,y=7mm,
 every node/.style={font=\scriptsize}]
\coordinate (x) at (6,2.5);
\fill (x) circle (1.6pt);
\node[above] at (x) {$x$};
\foreach \i in {0,...,4} {
 \coordinate (v\i) at (3*\i,0);
 \draw[thick] (x)--(v\i);
 \fill (v\i) circle (1.6pt);
 \node[above=1mm,fill=white,inner sep=1pt] at (v\i) {$v_{\i}$};
}
\foreach \i in {0,...,3} {
 \pgfmathtruncatemacro{\j}{\i+1}
 \draw[thick] (v\i)--(v\j);
 \foreach \b/\zz/\pp in {1/-.45/-.60,2/-1.65/-2.00} {
  \coordinate (z\i\b) at (3*\i+1.5,\zz);
  \coordinate (p\i\b) at (3*\i+.7,\pp);
  \coordinate (q\i\b) at (3*\i+2.3,\pp);
  \draw[gray!80,line width=.55pt] (v\i)--(p\i\b)--(z\i\b)--(v\i)
                (v\j)--(q\i\b)--(z\i\b)--(v\j);
  \foreach \w in {z,p,q} {\fill (\w\i\b) circle (1.3pt);}
 }
}
\end{tikzpicture}
\caption{The graph $F_{2,4}$.}
\label{fig:lower-fan}
\end{figure}

\begin{lemma}\label{lem:lower-fan-minor}
For all integers $r\ge1$ and $N\ge2$, the graph $F_{r,N}$ belongs
to $\mathcal S_{2r+1}$.
\end{lemma}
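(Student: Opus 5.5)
We must show four things about $F_{r,N}$: it is $2$-connected, it is not a cycle, it has no $K_4$-minor, and it has no $K_{2,2r+1}$-minor. We handle them in that order.

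\textbf{Not a cycle, and $2$-connected.} $F_{r,N}$ is not a cycle because $x$ has degree $N+1\ge 3$. For $2$-connectivity, the fan on $x,v_0,\dots,v_N$ is $2$-connected. Each copy of $D(v_i,v_{i+1})$ is the union of the triangles $v_ipz$ and $v_{i+1}qz$. Attaching it adds the path $v_i z v_{i+1}$, which is an ear between two distinct vertices of a $2$-connected graph. The vertices $p$ and $q$ are then further ears on the edges $v_iz$ and $zv_{i+1}$. Adding ears preserves $2$-connectivity.

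\textbf{No $K_4$-minor.} The same ear description shows that $F_{r,N}$ is series-parallel. The fan is outerplanar, hence $K_4$-minor-free. Adding a vertex adjacent to both ends of an existing edge preserves treewidth at most $2$ (equivalently, $K_4$-minor-freeness). Each copy is obtained this way: first add $z$ adjacent to $v_i,v_{i+1}$ via the edge $v_iv_{i+1}$, then $p$ on $v_iz$, then $q$ on $zv_{i+1}$. So $F_{r,N}$ is a partial $2$-tree.

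\textbf{No $K_{2,2r+1}$-minor.} This is the main obstacle. I would reduce it to counting internally disjoint paths. In a $2$-connected $K_4$-minor-free graph, a $K_{2,s}$-minor yields two vertices joined by $s$ internally disjoint paths. Here is a proposed argument for that reduction. Take a minimal $K_{2,s}$-minor model with branch sets $X,Y$ and $s$ connected sets. In a series-parallel graph, routing through $X$ and $Y$ can be contracted to single vertices without creating a $K_4$. If this reduction proves delicate, an alternative is to argue directly with the $2$-sums.

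\textbf{Counting the paths.} Decompose $F_{r,N}$ as $2$-sums along the pairs $\{v_i,v_{i+1}\}$: the fan glued with $r$ copies of $D$ on each fan edge $v_iv_{i+1}$. Consider any two vertices $a,b$.

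\begin{itemize}
\item If $\{a,b\}=\{v_i,v_{i+1}\}$, the paths are: the edge, one path through each $D$-copy (each copy has a $2$-cut structure at $z$, so at most one disjoint $v_i$–$v_{i+1}$ path passes through it), and one path through $x$ in the rest of the fan. That gives at most $r+2$.
\item If $\{a,b\}=\{x,v_i\}$, a $D$-copy contributes nothing beyond routing along the edge $v_iv_{i+1}$, and $\deg(x)$-style counting through the $2$-cuts gives at most $3$.
\item If both vertices are internal to copies, or are in different copies, there are $2$-cuts of size $2$ separating them, so the count is at most $2$ or $3$.
\item If $a$ is a vertex $z$ and $b$ is a terminal, we have $\deg$-bounded counts of at most $4$.
\end{itemize}

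So the maximum number of internally disjoint paths between any two vertices is $\max(r+2,4)$. This is where the paper's choice $2r+1$ matters.

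Careful: this bound is smaller than $2r+1$ for large $r$. That suggests the intended minor count is different. In a minor, the $K_{2,s}$ branch sets may be large, for instance $X=\{v_i\}$ and $Y=\{x\}\cup(\text{rest})$. Then the $r$ copies on edge $v_{i-1}v_i$ plus the $r$ copies on $v_iv_{i+1}$ give $2r$ disjoint connectors, for example each $\{p,z\}$ linking $v_i$ to $Y$. One more connector comes from $x$, giving $2r+1$ in that configuration. So I would instead bound directly, for a $K_{2,s}$-minor with branch sets $X\ni$ (WLOG a single vertex after contraction), the number of components of $G-(X\cup Y)$ that touch both sets. The plan is to show every such model has $s\le 2r$. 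I would argue via $K_4$-freeness that one branch set is essentially a single vertex $v_i$ or $x$, and that the connectors correspond to distinct edges or $D$-copies incident to it inside distinct $2$-separations. Since $x$'s neighbours form a path, connectors at $x$ merge. Verifying that the extremal count is $2r$, not $2r+1$, is the step I expect to require the most care, and I would check it first on $F_{1,2}$.
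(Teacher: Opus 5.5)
\textbf{There is a genuine gap: the $K_{2,2r+1}$-minor-freeness, which is the real content of the lemma, is not proved.} Your arguments for $2$-connectivity, the non-cycle condition and $K_4$-minor-freeness are correct and match the paper's $2$-tree argument.

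Your proposed reduction says that in a $2$-connected $K_4$-minor-free graph, a $K_{2,s}$ minor yields two vertices joined by $s$ internally disjoint paths. This is false, and $F_{r,N}$ itself refutes it. Take branch sets $\{v_1\}$ and $\{v_0,x,v_2\}$. Use as singleton connectors the $2r$ vertices $z$ of the copies of $D$ on $v_0v_1$ and $v_1v_2$. This is a $K_{2,2r}$ minor. Yet your own count gives at most $\max\{r+2,4\}<2r$ internally disjoint paths between any two vertices once $r\ge3$. So bounding path multiplicities cannot bound $K_{2,s}$ minors here, because large branch sets genuinely help.

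Your fallback does not close the gap either. The claims that one branch set is ``essentially'' a single vertex, and that connectors correspond to distinct edges or copies, are never proved. The bound $s\le 2r$ is only listed as something to check. Your assertion that $x$ supplies a $(2r+1)$st connector in your configuration is wrong, since $x$ already lies in the second branch set; if it were right, the lemma would be false.

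The missing idea is a way to handle arbitrary branch sets. The paper splits $F_{r,N}$ into $r$ layers $H_j$, each consisting of the fan and the $j$th copy of $D$ on every path edge. Each $H_j\cong F_{1,N}$ is outerplanar, which also settles $r=1$.

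For $r\ge2$, a connected set avoiding the core lies inside one copy and has at most four neighbours outside itself. Such a set cannot be a branch set adjacent to $2r+1\ge5$ disjoint connectors, so both large branch sets meet the core.

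Intersecting every branch set with $V(H_j)$ then preserves connectivity and all required adjacencies. A route or edge through an omitted copy of $D(v_{i-1},v_i)$ forces the sets involved to contain $v_{i-1}$ and $v_i$, and the fan edge $v_{i-1}v_i$ can be used instead.

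A connector meeting the core survives in every layer. Any other connector lies inside one copy and survives in that copy's layer. Pigeonholing $2r+1$ connectors over $r$ layers puts three connectors in some $H_j$. This gives a $K_{2,3}$ minor in an outerplanar graph, a contradiction.
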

\begin{proof}
The graph is clearly $2$-connected and not a cycle. It is also
$K_4$-minor-free, since it is a $2$-tree: the fan is a $2$-tree,
and each copy of $D(u,v)$ is added by joining $z$ to $u,v$, then
$p$ to $u,z$ and $q$ to $v,z$.

For $r=1$, the graph is easily seen to be outerplanar, and hence
has no $K_{2,3}$ minor.
For $r\ge2$, there are $r$ copies attached to each path edge $v_{i-1}v_{i}$, where $i\in [N]$. 
Let $H_j$ consist of the fan and the $j$th copy on each path edge,
for $j\in[r]$. Each $H_j$ is isomorphic to $F_{1,N}$ and is
therefore outerplanar.

Suppose there are pairwise disjoint connected vertex sets
$A,B,C_1,\ldots,C_{2r+1}$, with each $C_i$ adjacent to both
$A$ and $B$. A connected set avoiding the core vertices lies inside one
copy of $D$ and has at most four external neighbours. Since
$2r+1\ge5$, both $A$ and $B$ must contain some core vertices.

For each connected set containing core vertices, we show that
its intersection with $V(H_j)$ preserves connectivity and adjacency
between disjoint sets. Different copies of $D$ are linked through
core vertices, and the terminals of each copy are adjacent in the
fan. Since $H_j$ contains the whole fan, each intersection remains
connected. If an edge joining two such sets lies in an omitted
copy of $D(v_{i-1},v_{i})$, then each set must contain a terminal of $D(v_{i-1},v_{i})$. As the sets are disjoint, one contains $v_{i-1}$
and the other contains $v_i$, so the edge $v_{i-1}v_i$ preserves
their adjacency.

For each $C_i$ containing a core vertex, we now know that its connectivity and required adjacencies are retained in every $H_j$. Any other $C_i$ lies wholly inside one copy of $D$, it still retains these properties in the $H_j$
containing that copy. There are $2r+1$ sets $C_i$ and only $r$
graphs $H_j$, so in some $H_j$ at least three intersections
$C_i\cap V(H_j)$ are nonempty. Together with $A\cap V(H_j)$ and
$B\cap V(H_j)$, three of these connected sets give a $K_{2,3}$
minor, contradicting outerplanarity.
\end{proof}

To construct an uncolourable DP-cover of $F_{r,N}$, label the
colours in each core list by $[k+1]$ and match equal labels on
each edge $xv_i$.
If $x$ receives colour $a$, then every path vertex must avoid $a$.
We use a separate path segment for each $a\in[k+1]$, and choose
its cover, including the attached copies of $D$, to be uncolourable
with the remaining $k$ colours. The following lemma gives this
construction.

\begin{lemma}\label{lem:lower-paththreshold}
For $k\ge3$, put
\[
 R(k)=\left\lceil\frac{(k+1)^2-4}{16}\right\rceil.
\]
Let $H$ be obtained from a path $P$ with $k$ edges by attaching
$r\ge R(k)$ copies of $D(u,v)$ to each edge $uv$ of $P$, with
pairwise disjoint interiors.
Then $H$ is not degree-truncated DP-$k$-colourable.
\end{lemma}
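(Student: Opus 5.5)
We build one explicit $f$-cover of $H$, with $f(w)=\min\{k,d_H(w)\}$, that has no $(L,M)$-colouring. Write $P=u_0u_1\cdots u_k$.

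\emph{Lists.} Each $u_i$ has degree at least $1+2r\ge k$, because $r\ge R(k)$ and $k\ge 3$. So every path vertex receives a list of exactly $k$ colours. Each internal vertex of a copy of $D$ receives a list of size equal to its degree in that copy: $2$ at $p$ and $q$, and $4$ at $z$. This is allowed, since an $f$-cover only needs $|L(w)|\ge f(w)$, and $4>\min\{k,4\}$ only when $k=3$. Thus Lemma~\ref{lem:D} applies to every copy. A copy attached to $u_iu_{i+1}$ with chosen pairs $A\subseteq L(u_i)$, $B\subseteq L(u_{i+1})$ forbids every pair in $A\times B$. We leave the matchings of unused copies empty, and such copies never obstruct anything. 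In addition, the path edge $u_iu_{i+1}$ itself forbids an arbitrary matching of pairs.

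\emph{Shrinking the reachable sets.} Let $S_i\subseteq L(u_i)$ be the set of colours at $u_i$ that extend to a colouring of $u_0,\dots,u_i$ together with the attached copies, so $S_0=L(u_0)$. On the edge $u_iu_{i+1}$ we choose a set $T_i\subseteq L(u_{i+1})$ and forbid every pair in $S_i\times T_i$. Then $S_{i+1}\subseteq L(u_{i+1})\setminus T_i$; pairs involving colours outside $S_i$ do not matter. We aim for $|S_i|=k-i$, which requires $|T_i|=i+1$. Then $S_k=\emptyset$, so no colouring exists. The rectangle to be killed on edge $i$ is $a\times b$ with $a=k-i$ and $b=i+1$, so $a+b=k+1$. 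Its area $ab$ is at most $(k+1)^2/4$, with the maximum in the middle of the path. We may use $r$ copies, each covering a $2\times 2$ block, possibly overlapping, together with one matching of up to $\min\{a,b\}$ pairs.

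\emph{Covering the rectangle.} When $a,b$ are even, $ab/4\le (k+1)^2/16$ blocks suffice. The bound $R(k)=\lceil((k+1)^2-4)/16\rceil$ suggests that the matching absorbs the deficit of about $4$ pairs. When a side is odd, we tile the even part by blocks and cover the leftover row or column by overlapping blocks. The remaining diagonal defect is covered by the matching. We are also free to perturb the size sequence $|S_i|$, for instance by taking $|S_{i+1}|=|S_i|-1$ except where parity is bad, or by front-loading the small cheap steps. This lets us avoid rectangles with two odd sides at the widest point. The last edges, where $a$ is $1$ or $2$, are cheap: a single row of length $b\le k$ needs only $\lceil b/2\rceil$ blocks, or can be done partly by the matching.

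\emph{Main obstacle.} The hard step is this last arithmetic: checking that for every $k\ge3$ each edge's rectangle can be covered by $R(k)$ blocks plus one matching. The odd-by-odd case with $a\approx b\approx (k+1)/2$ is critical. My first attempt will be to choose the size sequence so that the widest step is even by even, or even by odd with the matching covering the odd diagonal. Failing that, I will drop exactly two colours at the critical step and compensate on a neighbouring, cheaper step.
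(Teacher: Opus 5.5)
Your set-up is the paper's: lists of size $k$ on the path, sizes $2,4,2$ inside each copy of $D$, one $K_{2,2}$ forbidden per copy via Lemma~\ref{lem:D}, a matching on the path edge, and a staircase of killed rectangles of dimensions $a\times b$ with $a+b=k+1$. Overlapping or enlarged blocks are harmless, since extra forbidden pairs only help.

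The gap is that the entire content of the lemma is the covering statement you postpone as the ``main obstacle'': every $K_{a,b}$ with $a+b=k+1$ must be a union of at most $R(k)$ small rectangles and one matching. You do not prove this, and the routes you sketch do not reach the required counts.

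In particular, the odd-by-odd case cannot be avoided, either by perturbing the size sequence or by dropping colours at the critical step. Let $k=2h-1$, and let $i$ be the first index with $|S_{i+1}|\le h-1$. Then $|S_i|\ge h$. The colours at $u_{i+1}$ outside $S_{i+1}$ number at least $h$, and each must be forbidden with every colour of $S_i$. If $h$ is odd, an $h\times(h+1)$ rectangle has at least $h^2$ edges outside any matching. It therefore needs at least $(h^2+3)/4>(h^2-1)/4=R(k)$ blocks. Hence some step must kill exactly an $h\times h$ rectangle, with both sides odd, using only $(h^2-1)/4$ blocks.

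Tiling a corner and then covering the leftover row and column does not achieve this. For $k=9$, tiling the $4\times4$ corner and covering the leftover row and column of the $5\times5$ rectangle costs at least $7$ blocks, whereas $R(9)=6$. The paper handles this case by induction on $K_{2a+1,2a+1}$. Add colours $1,2$ to one side $A$ and colours $3,4$ to the other side $B$. Join $\{1,2\}$ to $a$ pairs partitioning $B\setminus\{3\}$, and join $\{3,4\}$ to $a$ pairs partitioning $A\setminus\{2\}$. Only $(2,3)$ is left uncovered, and it can be added to the old matching. This gives $a(a+1)$ blocks for $K_{2a+1,2a+1}$, and then $b(a+1)\le\lfloor h^2/4\rfloor$ blocks for $K_{2a+1,2b+1}$.

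For even $k=2h$, every step is even by odd. The cover you describe pairs up both sides and handles the leftover column with $K_{2,1}$'s or overlapping blocks. For sides $2a$ and $2b+1$ this costs $a(b+1)$ blocks, which in general exceeds $R(k)=\lceil h(h+1)/4\rceil$. For $k=6$, the $4\times3$ step needs only $3$ blocks by edge counting, but tiling the $4\times2$ part and then covering the leftover column costs $4$.

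The missing idea is the paper's staircase replacement. The four blocks between $\{1,2,3,4\}$ and $\{\alpha,\beta,\gamma\}$ are replaced by $\{1,2\}\times\{\gamma,\alpha\}$, $\{2,3\}\times\{\alpha,\beta\}$ and $\{3,4\}\times\{\gamma,\beta\}$. This leaves only $(1,\beta)$ and $(4,\alpha)$ to the matching. The replacement is applied $\min\{\lfloor a/2\rfloor,b\}$ times, with disjoint pairs and a common $\gamma$, so the uncovered edges still form a matching. The count then drops to $\max\{ab+\lceil a/2\rceil,(a-1)(b+1)+1\}\le R(k)$.

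Without these two constructions, or substitutes achieving the same counts, the proposal does not prove the lemma.
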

\begin{proof}
For each $v\in V(P)$, label the colours in $L(v)$ by $[k]$.
Give the internal vertices $p,z,q$ of each copy of $D$ lists of
sizes $2,4,2$, respectively. Along successive edges of $P$, we
use the edge matching and attached copies of $D$ to forbid all
pairs in $[m]\times\{m,m+1,\ldots,k\}$, for
$m=k,k-1,\ldots,1$.
The available colours are then forced into successively smaller
sets until none remain.

A path edge supplies a matching of forbidden pairs, and
Lemma~\ref{lem:D} allows each attached copy of $D$ to forbid a
prescribed $K_{2,2}$. For positive integers $m,n$, call a complete
bipartite subgraph of $K_{m,n}$ with one or two vertices in each
part a \emph{small rectangle}.
We show that whenever $m+n=k+1$, the edges of $K_{m,n}$ can be
covered by at most $R(k)$ small rectangles and a matching
(possibly empty). These rectangles may overlap.

Assume $k=2h-1$. Since $h^2\equiv0 \text{ or } 1\pmod4$, $R(k)=\lfloor h^2/4\rfloor$.
If $m=2a$ and $n=2b$, partition the two parts of $K_{m,n}$ into
$a$ pairs and $b$ pairs, respectively. Each choice of one pair
from each part gives a $K_{2,2}$, and these $ab$ copies cover all
edges. Since $a+b=h$, we have $ab\le\lfloor h^2/4\rfloor$.

Now let $m=2a+1\le n=2b+1$.  We first cover $K_{2a+1,2a+1}$ by
$a(a+1)$ rectangles and a matching, by induction on $a$.
The case $a=0$ needs no rectangles. For $a\ge1$, apply induction to $K_{2a-1,2a-1}$, and add two new colours $1,2$ to the first part $A$ and two new colours $3,4$ to
the second part $B$.
Partition $B\setminus\{3\}$ into $a$ pairs and combine each
pair with $\{1,2\}$ to form a $K_{2,2}$.
Similarly, partition $A\setminus\{2\}$ into $a$ pairs and
combine each pair with $\{3,4\}$.
These $2a$ rectangles cover all edges incident with the new
colours except $(2,3)$. Adding this pair to the old matching
still gives a matching in $K_{2a+1,2a+1}$.
This completes the induction.

Apply this construction to a
$K_{2a+1,2a+1}$ containing the whole smaller part, and pair the
$2(b-a)$ remaining colours of the larger part. For each such
pair, partition the smaller part into $a$ pairs and one singleton;
the edges between them are covered by $a+1$ small rectangles.
The total is
\[
 a(a+1)+(b-a)(a+1)=b(a+1)\le\lfloor h^2/4\rfloor,
\]
as $b+(a+1)=h$.

Assume $k=2h$. Write the parts as $U,V$, with
$|U|=2a$, $|V|=2b+1$, where $a\ge1$, $b\ge0$ and $a+b=h$.
Partition $U$ into $a$ pairs and $V$ into $b$ pairs and a single
colour $\gamma$. Each pair in $U$ requires $b$ copies of
$K_{2,2}$ and one $K_{2,1}$ to cover its edges to $V$.
This gives $a(b+1)$ rectangles.

We can save one rectangle as follows. Take two pairs from $U$,
labelled $\{1,2\},\{3,4\}$, and one pair $\{\alpha,\beta\}$
from $V$. The initial cover uses four rectangles between $\{1,2,3,4\}$ and $\{\alpha,\beta,\gamma\}$: two $K_{2,2}$s
and two $K_{2,1}$s. Replace them by three $K_{2,2}$s with
respective parts
\[
 \{1,2\},\ \{\gamma,\alpha\};\qquad
 \{2,3\},\ \{\alpha,\beta\};\qquad
 \{3,4\},\ \{\gamma,\beta\}.
\]
Only the edges $(1,\beta)$ and $(4,\alpha)$ remain uncovered,
and they form a matching. Thus this replacement saves one
rectangle.

Each replacement uses two pairs from $U$ and one pair from $V$.
Without reusing any of these pairs, we can make
$q=\min\{\lfloor a/2\rfloor,b\}$ replacements.
All replacements use the same colour $\gamma$, but no uncovered
edge meets $\gamma$. Hence all uncovered edges together form
a matching.
The construction uses
\[
 a(b+1)-q=\max\{ab+\lceil a/2\rceil,\,(a-1)(b+1)+1\}
\]
rectangles.
Both terms are at most $\lceil h(h+1)/4\rceil=R(k)$, since
\[
 \begin{aligned}
 ab+\lceil a/2\rceil
 &=\left\lceil\frac{(2a)(2b+1)}4\right\rceil
 \le\left\lceil\frac{h(h+1)}4\right\rceil,\\
 (a-1)(b+1)+1
 &\le\lfloor h^2/4\rfloor+1
 \le\lceil h(h+1)/4\rceil\qquad(h\ge2).
 \end{aligned}
\]

Therefore, the union of  at most $R(k)$ small rectangles
and one matching covers
$[m]\times\{m,m+1,\ldots,k\}$ for $m=k,k-1,\ldots,1$.
Place the matching on the corresponding path edge. Enlarge each
singleton side of a rectangle to two colours from $[k]$, and use Lemma~\ref{lem:D} to forbid the resulting
$K_{2,2}$ in a separate attached copy of $D$. Leave all matchings
in unused copies empty.
In any colouring of this cover, the successive path vertices
would have to receive colours in
$[k],[k-1],\ldots,[1],\varnothing$. Thus $H$ is not
degree-truncated DP-$k$-colourable.
\end{proof}

\begin{theorem}\label{thm:lower-fan}
For every integer $t\ge3$, with
$r=\lfloor(t-1)/2\rfloor$,
\[
 \chi^{\text{\sout{$\mathrm{d}$}}}_{\DP}(\mathcal S_t)
 \ge 1+\left\lfloor\sqrt{16r+4}\right\rfloor.
\]
\end{theorem}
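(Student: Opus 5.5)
Set $k=\lfloor\sqrt{16r+4}\rfloor$. We exhibit a graph in $\mathcal S_t$ that is not degree-truncated DP-$k$-colourable, which gives $\chi^{\text{\sout{$\mathrm{d}$}}}_{\DP}(\mathcal S_t)\ge k+1$. Since $t\ge3$ we have $r\ge1$, hence $16r+4\ge20$ and $k\ge4\ge3$, so Lemma~\ref{lem:lower-paththreshold} applies. From $k\le\sqrt{16r+4}$ we get $k^2-4\le16r$. This bounds $R(k)$ only when $k+1$ replaces $k$ in that lemma, so the choice of parameters needs care; see the last paragraph.

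The graph is $F_{r,N}$ with $N=(k+1)k+(k+1)-1$. The path $v_0\cdots v_N$ is split into $k+1$ consecutive segments, each of $k$ edges, separated by single connecting edges. Lemma~\ref{lem:lower-fan-minor} gives $F_{r,N}\in\mathcal S_{2r+1}$. As $2r+1\le t$, the graph has no $K_{2,t}$ minor, so it lies in $\mathcal S_t$.

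The cover is defined as follows. Each core vertex has degree at least $k$ and receives a list of size $k$ (for $x$ and the path vertices the degree is large once $r\ge1$). The internal vertices of the copies of $D$ receive lists of size equal to their degrees $2,4,2$. Since each list has only $k$ colours, label every core list by $[k]$, and match equal labels on each edge $xv_i$. If $x$ receives colour $a$, the path vertices are confined to $[k]\setminus\{a\}$, a set of $k-1$ colours. On the $a$-th segment, apply the construction of Lemma~\ref{lem:lower-paththreshold} with $k-1$ colours in place of $k$. That lemma needs $k-1$ path edges and $R(k-1)=\lceil(k^2-4)/16\rceil\le r$ copies of $D$ per edge. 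The inequality holds by the choice of $k$, and it is exactly why the count works out. This also shows each segment needs only $k-1$ edges; extra edges, and the connecting edges, are given empty matchings and empty $D$-covers. The $D$-copies on a segment are given the covers from that lemma, relabelled inside $[k]\setminus\{a\}$; the lemma's cover forbids pairs only among colours of $[k]\setminus\{a\}$ and is defined for arbitrary subsets $A,B$ by Lemma~\ref{lem:D}. Then, whatever colour $a$ is chosen at $x$, the $a$-th segment cannot be coloured, so the cover has no colouring.

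The main obstacle is the bookkeeping. The cover of one segment is built for $k-1$ colours, while the true lists have $k$ colours with colour $a$ removed by $x$. We must check that Lemma~\ref{lem:lower-paththreshold}'s forcing argument still goes through when the list sizes at the path vertices are $k$ rather than $k-1$. It does, because colour $a$ is forbidden at every path vertex by its edge to $x$. We must also check that the internal list sizes $2,4,2$ equal $\min\{k,d\}$, which holds since $k\ge4$. Finally, the lemma must be invoked with parameter $k-1\ge3$; this requires $k\ge4$, which we have.
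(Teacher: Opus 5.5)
Your proof is correct and is essentially the paper's argument with the index shifted by one: your $k=\lfloor\sqrt{16r+4}\rfloor$ is the paper's $k+1$. As in the paper, you place the construction from the proof of Lemma~\ref{lem:lower-paththreshold} on $k-1$ colours, justified by $R(k-1)=\lceil (k^2-4)/16\rceil\le r$, on one segment of the fan path for each colour of $x$. Your extra segment, padding edges and connecting edges are harmless but unnecessary: each segment is uncolourable whatever its first vertex receives, so the paper simply lets consecutive segments of exactly the needed length share endpoints.
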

\begin{proof}
Put $k=\lfloor\sqrt{16r+4}\rfloor-1$ and $N=k(k+1)$.
Then $k\ge3$ and $r\ge R(k)$. By
Lemma~\ref{lem:lower-fan-minor}, $F_{r,N}\in\mathcal S_t$,
since $2r+1\le t$.

Label the colours in each core list by $[k+1]$ and match equal
labels on each spoke. On each of the $k+1$ consecutive $k$-edge segments
and its attached copies of $D$, use the matchings constructed in
Lemma~\ref{lem:lower-paththreshold} on the path colours labelled
by $[k+1]\setminus\{a\}$, one segment for each $a\in[k+1]$.
Any choice of colour $a$ at $x$ leaves the corresponding segment
uncolourable.

Since $2r+2\ge k+1\ge4$, the core lists and the internal lists
of sizes $2,4,2$ have exactly the required truncated sizes. Hence
$
 \chi^{\text{\sout{$\mathrm{d}$}}}_{\DP}(\mathcal S_t)
 \ge k+2=1+\lfloor\sqrt{16r+4}\rfloor$.
\end{proof}

\section{A square-root upper bound}\label{sec:upper}

In this section, we will prove the following upper bound.

\begin{theorem}\label{thm:upper}
For every integer $t\ge3$, with $r=\lfloor(t-1)/2\rfloor$,
\[
 \chi^{\text{\sout{$\mathrm{d}$}}}_{\DP}(\mathcal S_t)\le\lceil8\sqrt r\rceil.
\]
\end{theorem}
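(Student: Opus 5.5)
We prove Theorem~\ref{thm:upper} by a minimal-counterexample argument. It uses the structure of $2$-connected $K_4$-minor-free graphs (series-parallel, hence $2$-degenerate with a vertex of degree two whose two neighbours are joined by many parallel structures), and it follows the reduction method of~\cite{LWWZ}. Put $k=\lceil 8\sqrt r\rceil$. We will not work with $G$ and the cover $(L,M)$ directly. Instead we prove a stronger, induction-friendly statement about a \emph{generalized cover}. In such a cover, each edge $xy$ of a reduced graph carries a constraint graph between $L(x)$ and $L(y)$: a matching, plus at most a bounded number of ``small rectangles'' (subgraphs of $K_{2,2}$, each coming from a deleted degree-two vertex with a $2$-list), plus forbidden-colour sets $A_{xy}(x)\subseteq L(x)$ recording colours of $x$ that are already dead because of deleted structure hanging on $xy$. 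We attach to each vertex a weight bounding how many colours of a neighbour a single colour choice at that vertex can forbid. The invariant is that every vertex retains more available colours than the total weight its uncoloured neighbours can destroy.

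\textbf{Path multiplicities.} In a $K_{2,t}$-minor-free graph, any two vertices $x,y$ are joined by at most $t-1$ internally disjoint paths; contracting paths yields $K_{2,t}$ otherwise. In our setting each reduced edge $xy$ stands for a bundle of internally disjoint $x$--$y$ paths in $G$, and a vertex lies on few such bundles. Let $\mu(xy)$ be the number of paths in the bundle of $xy$. We will show that the rectangles on $xy$ number at most about $\mu(xy)^2/4$, and that $|A_{xy}(x)|$ is at most about $\mu(xy)$. This mirrors the lower-bound computation in Lemma~\ref{lem:lower-paththreshold}, where $r$ copies of $D$ cover about $(k+1)^2/16$ rectangles. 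The central counting step is that in a $2$-connected series-parallel graph the bundles at a vertex $v$ with truncated list size $k$ are arranged so that their total cost is $O(\sqrt{\mu})$ per unit of degree. A choice of colour at $x$ kills at most $1+2(\#\text{rectangles})$ colours at $y$, but only at most $\sqrt{\text{that}}\cdot$const colours once rectangles are counted with overlaps. We therefore need an inequality of the form $\sum_{y}(\text{damage from } y)\le d(v)$ for vertices of small degree, and $\le 8\sqrt r$ for vertices of large degree. The constant $8$ comes from $2\sqrt{16 r}$.

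\textbf{Reduction step.} First delete vertices of degree two in the current reduced graph. Let $w$ have neighbours $x,y$ with $|L(w)|\ge2$. If $|L(w)|=2$, the pairs of $(L(x),L(y))$ that fail to extend to $w$ form a subgraph of $K_{2,2}$, which we add as a rectangle on $xy$. If $|L(w)|\ge3$, we add to $xy$ only a matching-like constraint or nothing. If a colour of $x$ kills all of $L(w)$ on its own, it goes into $A_{xy}(x)$. The series-parallel decomposition guarantees that such $w$ exist and that repeatedly suppressing degree-two vertices and merging parallel edges keeps the invariant. Parallel bundles merge by adding multiplicities, which stay at most $t-1\le 2r+1$. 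Finally the graph reduces to a single edge or a triangle, and we colour it directly using the invariant. We then extend greedily back through the deleted vertices; this works by construction of the recorded constraints.

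\textbf{Main obstacle.} The hardest step is the weight accounting at a high-degree vertex $v$ of a reduced graph. We must show that the combined loss from all incident bundles is less than $\min\{k,d_G(v)\}$. Two inputs are needed: at most $\approx\mu^2/4$ rectangles per bundle, with $\mu\le 2r+1$, and the structure forcing the bundles at $v$ to be chained along a fan-like path, as in $F_{r,N}$, so that only $O(1)$ bundles are ``active'' when $v$ is coloured. The damage from an active bundle with $R$ rectangles is bounded by roughly $2\sqrt{R}+O(1)$ rather than $2R$. For this we plan to choose colours for $v$ via a counting/averaging argument over $L(v)$: rectangles cover at most $4R$ pairs, so some colour meets few of them. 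We expect this averaging lemma, giving damage at most $4\sqrt{r}$ per side and two sides per vertex, to be the crux, and we would try first to prove it by induction on the bundle, matching the rectangle-covering constructions of Lemma~\ref{lem:lower-paththreshold} in reverse.
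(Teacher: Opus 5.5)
Your framework (matchings, small relations, forbidden-colour sets, endpoint weights, deleting a degree-two vertex of the reduced graph) matches the paper's. However, the step that makes the truncated case work is missing, and the reduction rule you state is false. Suppose the deleted vertex $w$, with neighbours $x,y$, has a truncated list $|L(w)|=k$. Its constraints to $x$ and $y$ may already carry many accumulated small relations. So the set $\mathcal B$ of pairs $(a,b)\in L(x)\times L(y)$ that \emph{jointly} forbid all of $L(w)$ is not captured by colours that kill $L(w)$ on their own, and in general it is neither a matching nor a small relation. For example, a colour $a$ lying in $q$ small relations on $xw$ can forbid $2q$ colours of $L(w)$, and a colour $b$ on the $y$ side another $2q$. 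When $4q\ge k$ the pair $(a,b)$ can lie in $\mathcal B$ although neither colour alone is fatal, and several such heavy colours on each side can pair up into a large complete bipartite graph. So ``if $|L(w)|\ge3$ we add only a matching-like constraint or nothing'' fails exactly where it matters.

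Nor does a ``$\sqrt R$ bound with overlaps'' hold: a colour lying in $R$ rectangles really can forbid $2R$ colours. Averaging only shows that \emph{some} colour is light, which does not help at a vertex whose colour is also constrained by its other neighbours. The paper's key move is to put into the forbidden set, in advance, the set $T_x$ of colours whose degree in the union $R_{xw}$ of the small relations on $xw$ exceeds $2\sqrt r$, and symmetrically $T_y$. Since $|E(R_{xw})|\le4q$, there are at most $2q/\sqrt r$ such colours. This is exactly the room left by $|A_{xw}(x)|\le2(\mu-q)/\sqrt r$ once the $q$ small relations are discarded. Since $|E(Q)|\le2\lambda_Q(x)$, one gets $|T_x|+1\le\sum_i\lambda_{Q_i}(x)$ when $q\ge1$. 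Hence the new constraint, of weight $|A_{xw}(x)|+|T_x|+1$, is no heavier at $x$ than the old one. Every surviving colour then forbids at most $2\mu/\sqrt r+2\sqrt r-1$ colours of $L(w)$. (Also, each deleted vertex contributes at most one small relation, so a bundle carries at most $\mu$ rectangles, not about $\mu^2/4$.)

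That bound closes the argument only if both bundles at $w$ have multiplicity at most $r$. Then each side forbids at most $4\sqrt r-1$ colours, and $8\sqrt r-2<k=|L(w)|$; this is where the constant $8$ comes from. With only $\mu\le2r+1$, as in your sketch, the per-side bound is about $6\sqrt r$ and the two sides no longer fit under $k$. The paper secures $\mu_{xw},\mu_{wy}\le r$ in two steps. First, at most one pair of the reduced graph $H$ has multiplicity exceeding $r$: two such pairs lie on a common cycle of the $2$-connected graph $H$, and contracting the two complementary arcs gives a $K_{2,2r+2}$ minor. Second, a $2$-connected $K_4$-minor-free graph has a degree-two vertex outside the ends of any prescribed edge.

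Your ``fan-like chaining with $O(1)$ active bundles'' at high-degree vertices is neither established nor needed. The list condition $|L(x)|\ge\min\{k,\sum_y w_{xy}(x)\}$ is local, and no reduction increases the total weight at a surviving vertex. So damage is only ever checked at the deleted vertex and in the two-vertex base case.

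Finally, your reduction can end at a cycle with tight constraints; a triangle with $2$-lists and matchings need not be colourable. So ``colour the final edge or triangle directly'' fails. The paper permits a cycle only when some constraint has $\varepsilon=1$. It also chooses the protected edge $e$ (one with $\mu_e>r$, or else one with $\mu_e+\delta_e\ge2$) so that a reduction in which every $\varepsilon$ stays $0$ never produces a cycle.
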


\begin{lemma}\label{lem:degree-two}
Let $G$ be a simple $2$-connected $K_4$-minor-free graph and let
$uv\in E(G)$. Then $G$ has a degree-two vertex outside $\{u,v\}$.
\end{lemma}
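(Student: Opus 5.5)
We argue by induction on $|V(G)|$, using the ear (series-parallel) structure of $2$-connected $K_4$-minor-free graphs. If $G$ is a cycle, it has at least three vertices, so a vertex other than $u,v$ exists and every vertex has degree two. Assume now that $G$ is not a cycle. A $2$-connected graph that is not a cycle has a vertex of degree at least $3$, and hence a pair of vertices joined by at least three internally disjoint paths. Since $G$ has no $K_4$ minor, it is a series-parallel graph and admits a $2$-separation.

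The first step is to prove a slightly stronger statement that is easier to induct on. For every edge $uv$, we show there is a degree-two vertex outside $\{u,v\}$ whose two neighbours are adjacent, or at least a degree-two vertex in every "side" of a $2$-cut. Concretely, we use the following claim. If $\{a,b\}$ is a $2$-cut of $G$ and $C$ is a component of $G-\{a,b\}$, then $C$ contains a vertex of degree two in $G$.

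The claim is proved by induction on $|C|$. Let $H$ be $G[C\cup\{a,b\}]$ together with the edge $ab$; it is $2$-connected, $K_4$-minor-free and smaller than $G$ (unless $G-\{a,b\}$ is connected, which we exclude). If $H$ is a triangle $abc$, then $c$ has degree two in $G$. Otherwise $H$ is not a triangle, and the smaller case applied to $H$ with the edge $ab$ gives a degree-two vertex of $H$ outside $\{a,b\}$. That vertex lies in $C$, and its degree in $G$ equals its degree in $H$, because it is not $a$ or $b$.

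To finish, we take a $2$-cut in $G$ that avoids separating $u$ from $v$ badly. Since $uv$ is an edge, $u$ and $v$ are never in different components of $G-\{a,b\}$. So, for any $2$-cut $\{a,b\}$, some component $C$ of $G-\{a,b\}$ misses both $u$ and $v$: the edge $uv$ meets at most one component, and there are at least two components. Applying the claim to this $C$ gives the required vertex.

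The main obstacle is establishing that a non-cycle $2$-connected $K_4$-minor-free graph has a $2$-cut at all. For this we use the standard fact that $3$-connected graphs contain a $K_4$ minor. (Alternatively, take a subdivided theta $\Theta$ between two branch vertices $a,b$: if some bridge of $\Theta$ attached to two different paths avoided $\{a,b\}$, it would create a $K_4$ minor, so $\{a,b\}$ separates.) The small base cases are a triangle or a multigraph-free graph on four vertices, and they are checked directly.
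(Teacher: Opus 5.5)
Your proof is correct and follows essentially the paper's route. You obtain a $2$-cut $\{a,b\}$ from the fact that $3$-connected graphs contain a $K_4$ minor, choose a component $C$ of $G-\{a,b\}$ missing $u$ and $v$, and apply the induction hypothesis on $|V(G)|$ to $G[C\cup\{a,b\}]+ab$ with the edge $ab$; your remark that at most one component meets $\{u,v\}$ just merges the paper's two cases $\{a,b\}=\{u,v\}$ and $\{a,b\}\ne\{u,v\}$.

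The only blemishes are cosmetic. The claim is really proved by the outer induction on $|V(G)|$, not on $|C|$. The aside about a degree-two vertex with adjacent neighbours is false in general (e.g.\ for $K_{2,3}$), but you never use it.
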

\begin{proof}
We prove this by induction on $|V(G)|$. The triangle is immediate. Every $3$-connected graph on at least four vertices contains a subdivision of $K_4$:
choose a vertex $w$, a cycle in $G-w$, and three internally disjoint
paths from $w$ to distinct vertices of that cycle. Such paths exist by Menger's theorem and  $3$-connectivity.

Thus, if $|V(G)|\ge4$, there is a two-vertex cut $\{a,b\}$. Assume $C_1,...,C_k$ ($k\ge 2$) are connected components of $G-\{a,b\}$. If $\{a,b\}=\{u,v\}$, then apply induction to $G[V(C_1)\cup\{a,b\}]$. We know that $G[V(C_1)\cup\{a,b\}]$ has a degree-two vertex in $C_1$, which is also a degree-two vertex in $G$. 

We may assume that $u, v \in V(C_1)\cup\{a,b\}$. Choose another component $C_2$.  Let $G'=G[V(C_2)\cup\{a,b\}]$ if $ G[V(C_2)\cup\{a,b\}]$ contains the edge $ab$, otherwise, $G'=G[V(C_2)\cup\{a,b\}]+ab$. Since $ab\in E(G')$,  the  graph $G'$ is 2-connected and $K_4$-minor free.  By induction on $G'$, we know that there is a degree-two vertex outside $\{a,b\}$. The vertex has the same degree in $G$ and is distinct from $u,v$.
\end{proof}

After removing a degree-two vertex $z$ with neighbours $x,y$,
we must forbid any pair of colours at $x,y$ that cannot be
extended to a colour at $z$. Together with any existing restrictions
between $x$ and $y$, these pairs need not form a matching.
We therefore allow more general colour constraints.
We use $S$ for the vertices that still need to be coloured.
Vertices outside $S$ are kept only to count two-edge paths
between vertices in $S$.

Let $G$ be a simple graph, and let $S\subseteq V(G)$ with $|S|\ge2$.
Assume that $V(G)\setminus S$ is independent and every vertex outside
$S$ has degree two. Thus each such vertex lies on a two-edge path
whose ends belong to $S$. For distinct $x,y\in S$, define 
\begin{align*}
 \mu_{xy}&=|\{z\in V(G)\setminus S:N_G(z)=\{x,y\}\}|,
 \\
 \delta_{xy}&=\begin{cases}1,&xy\in E(G),\\0,&xy\notin E(G),\end{cases}
 \qquad
 P(G,S)=\{xy:\mu_{xy}+\delta_{xy}>0\}.
\end{align*}
Here $xy$ denotes an unordered pair. In particular, $\mu_{xy}$ is
the number of two-edge $x$--$y$ paths with their internal vertex
outside $S$, we call it \emph{path multiplicity}. The path multiplicity is determined by $G$ and $S$.

We assign lists only to the vertices in $S$. Initially $S=V(G)$, so every $\mu_{xy}$ is zero and $P(G,S)=E(G)$.
Figure~\ref{fig:marked-set} shows how $S$ and the multiplicities can
change even when the graph $G$ stays the same.

\begin{figure}[H]
\centering
\begin{tikzpicture}[x=6mm,y=6mm,every node/.style={font=\footnotesize},
 active/.style={circle,fill=black,inner sep=1.8pt},
 inactive/.style={circle,draw=black,fill=white,inner sep=1.7pt}]
\foreach \panel in {0,1,2}{
 \begin{scope}[xshift=\panel*4.2cm]
  \coordinate (x) at (0,0);\coordinate (y) at (3,0);
  \foreach \i/\h in {1/1.8,2/1.1,3/.4,4/-.4,5/-1.1,6/-1.8}{
   \coordinate (z\i) at (1.5,\h);
   \draw (x)--(z\i)--(y);
   \ifnum\panel=0
    \node[active] at (z\i) {};
   \else\ifnum\panel=1
    \ifnum\i<3 \node[inactive] at (z\i) {};
    \else \node[active] at (z\i) {};\fi
   \else \node[inactive] at (z\i) {};\fi\fi
  }
  \node[active,label=left:$x$] at (x) {};
  \node[active,label=right:$y$] at (y) {};
  \ifnum\panel=0
   \node at (1.5,-2.4) {$S=V(G),\ \mu_{xy}=0$};
  \else\ifnum\panel=1
   \node at (1.5,-2.4) {$|S|=6,\ \mu_{xy}=2$};
  \else
   \node at (1.5,-2.4) {$S=\{x,y\},\ \mu_{xy}=6$};
  \fi\fi
 \end{scope}
}
\end{tikzpicture}
\caption{The same graph $G=K_{2,6}$ with three choices of $S$.
Filled vertices belong to $S$; open vertices are
outside $S$ and are counted by $\mu$.}
\label{fig:marked-set}
\end{figure}

The cover of $D(x,y)$ in Lemma~\ref{lem:D} shows that deleting
internal vertices can leave a $K_{2,2}$ of forbidden pairs between
the terminal lists, even when all original constraints are matchings.
Our induction uses the same idea: when a vertex with two remaining
neighbours is deleted, we add edges between their lists for pairs
that cannot extend to that vertex. We therefore introduce
subgraphs of $K_{2,2}$ as basic constraints, together with
endpoint weights.

A \emph{small relation} between two lists is a bipartite graph whose
edges meet at most two colours in each list. We identify such a
graph with its edge set. For a small relation
$Q\subseteq L(x)\times L(y)$, put
\begin{align*}
 \lambda_Q(x)&=\max\bigl(\{1\}\cup\{d_Q(b):b\in L(y)\}\bigr),\\
 \lambda_Q(y)&=\max\bigl(\{1\}\cup\{d_Q(a):a\in L(x)\}\bigr).
\end{align*}
The weight $ \lambda_Q(x)$ at $x$ bounds the number of colours forbidden in $L(x)$
by one colour at $y$. An empty small relation has weight $1$ at both endpoints.
Each small relation is a subgraph of $K_{2,2}$, and has at most four edges. Summing degrees over
the at most two colours incident with its edges in either list gives $|E(Q)|\le2\lambda_Q(x)$ and $|E(Q)|\le2\lambda_Q(y)$.

Several such constraints may arise between the same two lists
as the induction proceeds. We therefore allow a family of small
relations $Q_i$ between $L(x)$ and $L(y)$, together with the
original matching on $xy$ if this edge is present.

With truncated lists, a later reduction may create forbidden
pairs involving many colours. We handle these by specifying
sets $A_{xy}(x)$ and $A_{xy}(y)$ of \emph{forbidden colours}.
These colours remain in the lists, but cannot be chosen.

We collect these data in the tuple
\begin{equation}\label{eq:tuple}
 \begin{aligned}
 \mathcal C&=\bigl(G,S,L,(\mathcal D_{xy})_{xy\in P(G,S)}\bigr),\\
 \mathcal D_{xy}&=\bigl(A_{xy}(x),A_{xy}(y),M_{xy},
          (Q_{xy,i})_{i=1}^{q_{xy}},\varepsilon_{xy}\bigr).
 \end{aligned}
\end{equation}
Its entries have the following meaning:
\begin{itemize}[leftmargin=*]
\item $L$ assigns pairwise disjoint finite lists $L(x)$ to $x\in S$.
\item $A_{xy}(x)\subseteq L(x)$ and $A_{xy}(y)\subseteq L(y)$.
\item $M_{xy}$ is a matching between the two lists. It is empty
if $\delta_{xy}=0$; it may also be empty if $\delta_{xy}=1$.
\item $q_{xy}$ is a nonnegative integer, and each $Q_{xy,i}$ is
a small relation, possibly empty.
\item $\varepsilon_{xy}\in\{0,1\}$, with $\varepsilon_{xy}=1$
whenever $A_{xy}(x)\cup A_{xy}(y)\ne\varnothing$.
\end{itemize}
The forbidden pairs at $xy$ are
\[
 F_{xy}=\bigl(A_{xy}(x)\times L(y)\bigr)
       \cup\bigl(L(x)\times A_{xy}(y)\bigr)
       \cup M_{xy}\cup\bigcup_{i=1}^{q_{xy}}Q_{xy,i}.
\]
A colouring of $\mathcal C$ chooses a colour in each $L(x)$,
$x\in S$, and avoids $F_{xy}$ for every $xy\in P(G,S)$.

Define
\[
 w_{xy}(x)=|A_{xy}(x)|+
       \sum_{i=1}^{q_{xy}}\lambda_{Q_{xy,i}}(x)
       +\delta_{xy}+\varepsilon_{xy},
\]
and define $w_{xy}(y)$ symmetrically. The weight at $x$ bounds
how many colours in $L(x)$ a choice at $y$ can forbid. More precisely,
for every $b\in L(y)\setminus A_{xy}(y)$,
\[
 d_{F_{xy}}(b)\le |A_{xy}(x)|+
 \sum_{i=1}^{q_{xy}}\lambda_{Q_{xy,i}}(x)+\delta_{xy}
 =w_{xy}(x)-\varepsilon_{xy}.
\]
Here the terms account for the forbidden colours at $x$, the small
relations, and the matching, respectively. Thus, when
$A_{xy}(x)\ne\varnothing$, we have $\varepsilon_{xy}=1$, and
any such choice at $y$ forbids at most $w_{xy}(x)-1$ colours at $x$.
The extra unit also gives $w_{xy}(y)\ge |A_{xy}(y)|+1$, so
$|L(y)|\ge w_{xy}(y)$ guarantees a colour outside $A_{xy}(y)$.
The same statements hold with $x$ and $y$ interchanged.
In the truncated setting of Theorem~\ref{thm:valid}, the existence
of such a colour is established in Claim~\ref{clm:weight-bounds}.

Fix a threshold $h>0$. In the union of $q$ small relations,
we add every colour of degree greater than $h$ to the corresponding
forbidden set. This union has at most $4q$ edges.  Hence at most $4q/h$ colours are added to the forbidden set at each end.
We charge $h/4$ units of multiplicity per newly forbidden colour.
The total charge at each end is at most $q$.
We impose the same charge on previously forbidden colours.
When the multiplicity is at most $r$, this bounds their number
at either end by $4r/h$. Each colour outside these sets has at most
$h$ neighbours in the small relations. The two main terms $4r/h$
and $h$ balance at $h=2\sqrt r$. With this choice, the
$\mu-q$ units not assigned to small relations allow at most
$2(\mu-q)/\sqrt r$ forbidden colours at each end, as required below.

\Needspace{12\baselineskip}
\begin{definition}\label{def:valid}
Let $r\ge1$ and $k\ge1$ be integers. The tuple $\mathcal C$ is
\emph{$(r,k)$-valid} if, for every $xy\in P(G,S)$,
\begin{align}
 &|A_{xy}(v)|\le\frac{2(\mu_{xy}-q_{xy})}{\sqrt r}
       \qquad(v\in\{x,y\}),
 \notag\\
 &\mu_{xy}=q_{xy}\quad\hbox{if }\varepsilon_{xy}=0,
 \label{eq:normalisation}
\end{align}
and, for every $x\in S$,
\begin{equation}\label{eq:lists}
 |L(x)|\ge\min\left\{k,\sum_{y:\,xy\in P(G,S)}w_{xy}(x)\right\}.
\end{equation}
\end{definition}

If a pair $xy\in P(G,S)$ has neither a small relation nor a direct
edge, validity forces $\varepsilon_{xy}=1$. Indeed, otherwise
$\mu_{xy}=q_{xy}=0$, contradicting $xy\in P(G,S)$.
Thus $q_{xy}+\delta_{xy}+\varepsilon_{xy}>0$ at every pair.

A simple DP-cover $(L,M)$ is represented by taking $S=V(G)$, using its
edge matchings, and putting $q_{xy}=\varepsilon_{xy}=0$ and
$A_{xy}(x)=A_{xy}(y)=\varnothing$.
Then $\mu_{xy}=0$, $\delta_{xy}=1$ and $w_{xy}(x)=1$ on each
edge. Thus \eqref{eq:lists} is exactly the degree-truncated
list condition. Equation~\eqref{eq:normalisation} also says that,
when $\varepsilon_{xy}=0$, the number of small relations is
the actual path multiplicity in $G$.

The following strengthened theorem also allows a cycle when some constraint
has $\varepsilon=1$. This extra case is needed because a reduction
may produce a cycle.

\Needspace{8\baselineskip}
\begin{theorem}\label{thm:valid}
Let $r\ge1$ be an integer and put $k=\lceil8\sqrt r\rceil$.
Let $\mathcal C=(G,S,L,(\mathcal D_{xy})_{xy\in P(G,S)})$ be an
$(r,k)$-valid tuple, where $|S|\ge2$ and $V(G)\setminus S$ is
an independent set of degree-two vertices.
Suppose $G$ is $2$-connected and
$\{K_4,K_{2,2r+2}\}$-minor-free. If $G$ is not a cycle, or $G$ is a cycle with some
$\varepsilon_{xy}=1$, then $\mathcal C$ has a colouring.
\end{theorem}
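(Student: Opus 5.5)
We argue by induction on $|S|$, and within that on $|V(G)|$, taking a minimal counterexample $\mathcal C$. Two preliminary facts are used throughout. First, since $G$ has no $K_{2,2r+2}$ minor, for every pair $xy\in P(G,S)$ we have $\mu_{xy}+\delta_{xy}\le 2r+1$. In fact $\mu_{xy}\le r$ up to the contribution of one other $x$--$y$ path whenever $G$ is not just the union of these paths; the precise count will be fixed by inspecting $2$-connectivity. Second, we prove a weight claim (Claim~\ref{clm:weight-bounds}): for every $x\in S$ and every $xy\in P(G,S)$ we have $|L(x)\setminus A_{xy}(x)|\ge 1$. More quantitatively, $w_{xy}(x)\le \frac{2(\mu_{xy}-q_{xy})}{\sqrt r}+h q_{xy}/\ldots+2$. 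The intended form is that the total weight a neighbour pair contributes is at most about $4\sqrt r+2$, and this is exactly why $k=\lceil 8\sqrt r\rceil$ suffices. This claim is what guarantees that a truncated list of size $k$ still leaves room after forbidding the colours killed by at most two neighbours.

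The reduction step is as follows. We apply Lemma~\ref{lem:degree-two} to the graph obtained from $G$ by suppressing $V(G)\setminus S$. This gives a vertex $z\in S$ with exactly two neighbours $x,y$ in $P(G,S)$, chosen away from a fixed edge so that the rest remains $2$-connected. Its list satisfies $|L(z)|\ge\min\{k,w_{zx}(z)+w_{zy}(z)\}$. There are two cases.

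\textbf{(a) Truncated case, $|L(z)|\ge w_{zx}(z)+w_{zy}(z)$.} Each choice at $x$ outside $A_{zx}(x)$ forbids at most $w_{zx}(z)-\varepsilon_{zx}$ colours of $z$, and similarly for $y$, so $z$ always extends. We delete $z$ from $S$ and keep $z$ in $G$ as a degree-two vertex. This raises $\mu_{xy}$ by one, and we add the pair of forbidden sets from $A_{zx}(x)$ and $A_{zy}(y)$ into $A_{xy}$ with $\varepsilon_{xy}=1$. This is allowed since the new unit of multiplicity pays $2/\sqrt r$, provided $|A_{zx}|$ is controlled. If $z$ still has neighbours outside $S$, we first absorb them, using the fact that $V(G)\setminus S$ is independent.

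\textbf{(b) Small-list case.} Here $|L(z)|$ equals the small value $w_{zx}(z)+w_{zy}(z)$ on the nose. When this equals $2$, the forbidden pairs between $L(x)$ and $L(y)$ that fail to extend form a small relation, as in Lemma~\ref{lem:D}, and we add it as a new $Q_{xy,i}$ with $\mu_{xy}$ and $q_{xy}$ both increasing by one, so that \eqref{eq:normalisation} is preserved. When the list is larger but still below $k$, the bad pairs can be dense. We then apply the threshold rule with $h=2\sqrt r$: every colour of degree greater than $h$ in the union of the small relations is moved into $A_{xy}$, charging $h/4$ units of multiplicity per colour. This keeps $|A_{xy}(v)|\le 2(\mu_{xy}-q_{xy})/\sqrt r$ and each remaining $\lambda$-sum at most $h$.

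It remains to verify \eqref{eq:lists} at $x$ and $y$ after the reduction: the weight $w_{xy}(x)$ must not exceed $w_{zx}(x)$ plus the old $w_{xy}(x)$. Apart from the $\varepsilon$ units this is a direct computation, and any leftover unit is absorbed using the truncation at $k$. We also have to handle the base cases: $|S|=2$, and $G$ a cycle with some $\varepsilon_{xy}=1$. In the latter, the extra unit of slack at that pair allows greedy colouring around the cycle starting there. After the reduction the new graph might become a cycle, but then the new pair has $\varepsilon=1$, which is exactly the cycle exception in the statement.

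The main obstacle will be case (b), and specifically making the bookkeeping of $\varepsilon$, $\delta$ and the charges consistent. The merged relation at $xy$ has to satisfy the bound on $|A_{xy}(v)|$, which depends on $\mu$, while the list condition at $x$ uses the total weight. The factor $8$ in $k$ must cover $4r/h+h$ at both neighbours, namely $2(2\sqrt r+2\sqrt r)$. I would first try to prove the weight claim as the inequality $w_{xy}(x)\le 4\sqrt r+1$ for every pair, and derive everything else from it.
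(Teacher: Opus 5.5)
Your count of $4\sqrt r$ per side and $8\sqrt r$ in total is right, but it is attached to the wrong quantity. The inequality $w_{xy}(x)\le 4\sqrt r+1$ on which you plan to build everything is false for valid tuples. Definition~\ref{def:valid} bounds $|A_{xy}(x)|$ but not the $\lambda$-sum. For example, $q_{xy}=\mu_{xy}=r$ copies of one $K_{2,2}$ give $w_{xy}(x)\ge 2r$, which exceeds $4\sqrt r+1$ once $r\ge5$. Such families do arise, e.g.\ from $r$ copies of $D(x,y)$ all forbidding the same $K_{2,2}$ as in Lemma~\ref{lem:D}.

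What the paper controls is the \emph{degree} of good colours, not the weight. Let $R=\bigcup_i Q_{xy,i}$ and let $T(x)$ be the colours outside $A(x)$ with more than $2\sqrt r$ neighbours in $R$. Then $|T(x)|\le 2q/\sqrt r$, so $|A(x)|+|T(x)|\le 2\mu/\sqrt r$, and each $a\notin A(x)\cup T(x)$ forbids at most $2\mu/\sqrt r+2\sqrt r-1$ colours.

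This also shows that your case split is misplaced:
\begin{itemize}
\item If $|L(z)|<k$, the list condition forces $|L(z)|\ge w_{xz}(z)+w_{zy}(z)$, and the bad set $\mathcal B$ is never dense. When some $\varepsilon=1$ it is covered by the old forbidden sets. When both vanish it is a single small relation for \emph{every} value of $|L(z)|$, because a bad colour must meet every $Q_{xz,i}$. One still has to check $\widehat w(x)\le w_{xz}(x)$ when $w_{xz}(x)=1$.
\item Your case (a) asserts that $z$ always extends, which fails in this last subcase, and (b) treats it only when the value is $2$.
\item The genuinely dense case is $|L(z)|=k<w_{xz}(z)+w_{zy}(z)$. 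It is handled by moving $T_x,T_y$ into the new forbidden sets. This needs $\mu_{xz},\mu_{zy}\le r$, so that each side kills at most $4\sqrt r-1$ colours of $L(z)$ and $8\sqrt r-2<k$.
\end{itemize}

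Three structural ingredients are also missing.

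First, $\mu_{xz},\mu_{zy}\le r$ comes from the fact that at most one pair in $P(G,S)$ has multiplicity above $r$. Two such pairs lie on a common cycle of $H$, and contracting its two complementary arcs yields a $K_{2,2r+2}$ minor. One then chooses $z$ off that heavy edge. Your preliminary fact is not this statement, and $\mu_{xy}+\delta_{xy}\le 2r+1$ is itself false: take $K_{2,2r+1}$ plus the edge $xy$.

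Second, multiplicity must travel with the forbidden colours. $|\widehat A(x)|$ may be as large as $2\mu_{xz}/\sqrt r$, while raising $\mu_{xy}$ by one only allows $2/\sqrt r$ more forbidden colours. The paper raises it by $\nu=\max\{1,\lceil\frac{\sqrt r}{2}|\widehat A(x)|\rceil,\lceil\frac{\sqrt r}{2}|\widehat A(y)|\rceil\}\le\max\{1,\mu_{xz},\mu_{zy}\}$. It does so by keeping $\nu$ two-edge $x$--$z$ paths and contracting one $z$--$y$ path into $y$, so $G'$ is a minor of $G$ and stays $\{K_4,K_{2,2r+2}\}$-minor-free. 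Keeping $z$ as a degree-two vertex does not achieve this, and $z$ need not have degree two in $G$.

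Third, your cycle argument fails. In the small-relation case $\widehat\varepsilon=0$. When surplus $x$--$z$ paths are discarded (say $\mu_{xz}=2$, $\nu=1$), the reduced graph can be a cycle with all $\varepsilon'=0$, which lies outside the induction hypothesis. The paper rules this out by choosing the avoided edge $e$ with $\mu_e+\delta_e\ge2$ whenever possible. Its parallel paths then survive, and some vertex keeps degree at least three.
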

\begin{proof}
Suppose not, and choose an uncolourable tuple satisfying the
hypotheses with $|S|$ minimum. We may assume $|L(x)|\le k$ for
every $x\in S$: restricting larger lists to $k$ colours and
restricting the constraints accordingly preserves validity.

For a pair $xy\in P(G,S)$, write $F$, $A(x)$, $A(y)$, $q$,
$\delta$, $\varepsilon$, $\mu$, $w(x)$ and $w(y)$ for its data,
and put $R=\bigcup_i Q_i$. Let
\[
 T(x)=\{a\in L(x)\setminus A(x):d_R(a)>2\sqrt r\},
\]
and define $T(y)$ similarly.

\begin{claim}\label{clm:weight-bounds}
For every pair $xy\in P(G,S)$, there is a colour in
$L(x)\setminus(A(x)\cup T(x))$. Moreover,
\[
 |A(x)|+|T(x)|\le\frac{2\mu}{\sqrt r},\qquad
 |A(x)|+|T(x)|+1\le |L(x)|.
\]
The same assertions hold at $y$.
\end{claim}
\begin{proof}
Each colour in $T(x)$ has more than $2\sqrt r$ neighbours in $R$,
which has at most $4q$ edges. Counting edges at $T(x)$ gives
$|T(x)|\le2q/\sqrt r$. Combining this with
$|A(x)|\le2(\mu-q)/\sqrt r$ proves the first bound.

Put $s=\sum_i\lambda_{Q_i}(x)$.
Since $|E(Q_i)|\le2\lambda_{Q_i}(x)$, we have $|E(R)|\le2s$.
If $s>0$, the strict threshold in the definition of $T(x)$ gives
$|T(x)|<s/\sqrt r\le s$. By integrality, $|T(x)|+1\le s$, so
$w(x)\ge |A(x)|+|T(x)|+1$.
If $s=0$, then $q=0$, $T(x)=\varnothing$ and
$\delta+\varepsilon\ge1$, giving the same bound.

The $\mu$ common neighbours of $x,y$ give a $K_{2,\mu}$ subgraph
of $G$, so $\mu\le2r+1$. Therefore
\[
 |A(x)|+|T(x)|\le\frac{2\mu}{\sqrt r}
 \le4\sqrt r+\frac2{\sqrt r}<8\sqrt r\le k.
\]
Together with the list condition, this gives $|L(x)|\ge\min\{k,w(x)\}\ge |A(x)|+|T(x)|+1$,
so a colour outside $A(x)\cup T(x)$ exists.
\end{proof}

\begin{claim}\label{clm:degree-bound}
For every $a\in L(x)\setminus A(x)$,  $d_F(a)\le w(y)-\varepsilon$.
If also $a\notin T(x)$, then
\[
 d_F(a)\le\frac{2\mu}{\sqrt r}+2\sqrt r-1.
\]
In particular, when $\mu\le r$, such a colour forbids at most
$4\sqrt r-1$ colours in $L(y)$.
The same assertions hold with $x,y$ interchanged.
\end{claim}
\begin{proof}
A colour $a\in L(x)\setminus A(x)$ forbids the colours in $A(y)$,
at most $\delta$ colours through the matching, and at most
$\lambda_{Q_i}(y)$ through each $Q_i$. Hence
\[
 d_F(a)\le |A(y)|+\delta+\sum_i\lambda_{Q_i}(y)
 =w(y)-\varepsilon.
\]

If $a\notin T(x)$, then $d_R(a)\le2\sqrt r$.
Each small relation gives $a$ at most two neighbours, so also
$d_R(a)\le2q$. Validity gives $|A(y)|\le2(\mu-q)/\sqrt r$.
Therefore
\begin{align*}
 d_F(a)
 &\le |A(y)|+\min\{2\sqrt r,2q\}+1\\
 &\le\frac{2(\mu-q)}{\sqrt r}+\min\{2\sqrt r,2q\}+1
 \le\frac{2\mu}{\sqrt r}+2\sqrt r-1.
\end{align*}
For the last inequality, the expression
$-2q/\sqrt r+\min\{2\sqrt r,2q\}+1$ is nondecreasing up to
$q=\sqrt r$ and decreases thereafter, so it is at most
$2\sqrt r-1$.
\end{proof}

\begin{claim}\label{clm:at-least-three}
$|S|\ge3$.
\end{claim}
\begin{proof}
Suppose $S=\{x,y\}$.
It suffices to find a pair of colours outside $F$.
Every other vertex is adjacent to $x,y$, so $G$ consists of
$\mu$ two-edge $x$--$y$ paths and the edge $xy$ when $\delta=1$.
These paths contain a $K_{2,\mu}$, so $\mu\le2r+1$.

First suppose one list has $k$ colours, say $|L(y)|=k$.
Claim~\ref{clm:weight-bounds} gives a colour
$a\in L(x)\setminus(A(x)\cup T(x))$.
Claim~\ref{clm:degree-bound} bounds the number of colours it forbids in $L(y)$.
Using $\mu\le2r+1$, we obtain
\[ d_F(a)
 \le\frac{2\mu}{\sqrt r}+2\sqrt r-1\le 6\sqrt r+\frac2{\sqrt r}-1<8\sqrt r\le k.
\]
Hence some colour in $L(y)$ is not adjacent to $a$ in $F$.

We may now assume that
$|L(x)|\ge w(x)$ and $|L(y)|\ge w(y)$.
If $\varepsilon=1$, Claim~\ref{clm:weight-bounds} gives a colour
$a\in L(x)\setminus A(x)$. By Claim~\ref{clm:degree-bound},
$d_F(a)\le w(y)-1<|L(y)|$, again leaving a nonadjacent colour in $L(y)$.

Finally, suppose $\varepsilon=0$. Validity gives $q=\mu$ and
$A(x)=A(y)=\varnothing$. Since $G$ is not a cycle, its number
of $x$--$y$ paths satisfies $q+\delta=\mu+\delta\ge3$.
Thus a small relation $Q$ exists and $|L(x)|\ge w(x)\ge3$.
As $Q$ meets at most two colours in $L(x)$, choose
$a\in L(x)$ incident with no edge of $Q$.
So
\[
 d_F(a)\le w(y)-\lambda_Q(y)\le w(y)-1<|L(y)|.
\]
Hence $a$ has a nonadjacent colour in $L(y)$, a contradiction.
\end{proof}

Let $H$ be the simple graph with vertex set $S$ and edge set
$P(G,S)$. We obtain $H$ from $G$ by replacing each two-edge path
through a vertex outside $S$ with an edge between its ends,
keeping only one copy of each edge. Hence $H$ is a
$2$-connected $K_4$-minor-free graph.
Choose an edge $e$ of $H$ with $\mu_e>r$ if one exists.
Otherwise choose an edge with $\mu_e+\delta_e\ge2$ if possible,
and choose any edge if neither condition applies.

\begin{claim}\label{clm:delete-vertex}
There is a degree-two vertex $z$ of $H$ outside the ends of $e$
whose neighbours $x,y$ satisfy $\mu_{xz},\mu_{zy}\le r$.
\end{claim}
\begin{proof}
By Lemma~\ref{lem:degree-two}, $H$ has a degree-two vertex $z$
outside the ends of $e$.

We show that at most one pair in $P(G,S)$ has multiplicity larger
than $r$. Suppose instead that $\mu_{e_1},\mu_{e_2}>r$ for two
distinct edges $e_1,e_2$ of $H$. Since $H$ is $2$-connected,
these edges lie on a common cycle $C$: subdivide the two edges
and apply Menger's theorem to the new vertices.
Removing $e_1,e_2$ from $C$ leaves two disjoint paths, one of
which may be a single vertex. Replace their edges by the
corresponding paths in $G$, and contract each of the two resulting
paths to a vertex. The $\mu_{e_1}+\mu_{e_2}\ge2r+2$ vertices
counted by these multiplicities are now common neighbours of
the two resulting vertices. Thus $G$ has a $K_{2,2r+2}$ minor,
a contradiction.

Neither edge incident with $z$ in $H$ is $e$, so both have
multiplicity at most $r$ by the choice of $e$.
\end{proof}

Fix $z,x,y$ as in Claim~\ref{clm:delete-vertex}, and put
$S'=S\setminus\{z\}$. To apply minimality, we will replace $G$
by a minor $G'$ and the two constraints at $z$ by one between
$x,y$. The new multiplicities will be computed from $G'$ and $S'$.

Put $X=L(x)$, $Z=L(z)$ and $Y=L(y)$.
A pair of colours at $x,y$ fails to extend to $z$ precisely when
the two colours together forbid all of $Z$. Let
\[
 \mathcal B=\{(a,b)\in X\times Y:
                   N_{F_{xz}}(a)\cup N_{F_{zy}}(b)=Z\}.
\]
We must therefore forbid every pair in $\mathcal B$ in the
replacement constraint. Its endpoint weights must not increase,
so that the lists at $x,y$ remain large enough.
We first construct this constraint with a multiplicity $\nu$,
then obtain $G'$ with $\nu$ additional two-edge $x$--$y$ paths.
The paths counted by $\mu_{xy}$ are retained, giving
$\mu'_{xy}=\mu_{xy}+\nu$.

\Needspace{9\baselineskip}
\begin{claim}\label{clm:replacement}
There are an integer $1\le\nu\le\max\{1,\mu_{xz},\mu_{zy}\}$
and a replacement constraint between $X,Y$ whose forbidden
relation $\widehat F_{xy}$ contains $\mathcal B$.
We use hats for the data of this additional constraint.
These data satisfy the conditions on multiplicities and forbidden colours in
Definition~\ref{def:valid} with multiplicity $\nu$ and $\widehat\delta=0$, and
\[
 \widehat w(x)\le w_{xz}(x),\qquad \widehat w(y)\le w_{zy}(y),\qquad
 \widehat\varepsilon\ge\max\{\varepsilon_{xz},\varepsilon_{zy}\}.
\]
If $\widehat\varepsilon=0$, then $\widehat q=\nu=1$.
\end{claim}
\begin{proof}
The replacement has no matching, and we set $\widehat\delta=0$.

\emph{Case 1: $|Z|\ge w_{xz}(z)+w_{zy}(z)$ and
$\varepsilon_{xz}=\varepsilon_{zy}=0$.}
Validity implies that all sets of forbidden colours at $xz,zy$ are empty.
We show that $\mathcal B$ is a small relation, so it can serve
as the single small relation of the replacement constraint.

By Claim~\ref{clm:degree-bound}, a pair $(a,b)$ forbids at most
$w_{xz}(z)+w_{zy}(z)$ colours of $Z$.
Thus $\mathcal B$ is empty if $|Z|>w_{xz}(z)+w_{zy}(z)$.
Assume that $|Z|=w_{xz}(z)+w_{zy}(z)$.  Considering a pair $(a,b)$ in $\mathcal B$, $a$ must be incident with every $Q_{xz,i}$,
and $b$ with every $Q_{zy,i}$.
If both constraints $F_{xz}$, $F_{yz}$ have small relations, then at most two colours in each list are incident with every small relations.
If only the constraint at $xz$ has a small relation, then the constraint at $yz$  is a matching. There are at most two possible
colours $a$ at $L(x)$. Each $a$ leaves
exactly one colour of $Z$ available, which the matching $M_{zy}$
can pair with at most one colour $b$.
The case with $x,y$ interchanged is identical. 
If neither has a small relation, both are matchings; a pair in
$\mathcal B$ is possible only when $|Z|=2$, and $\mathcal B$
is a matching with at most two edges. In every case, $\mathcal B$ is a small relation.

Take $\widehat F_{xy}=\mathcal B$, with $\widehat q=\nu=1$,
$\widehat\varepsilon=0$ and empty sets of forbidden colours.
Since $\mathcal B$ is a small relation, $\widehat w(x)\le2$, so we only need
to consider $w_{xz}(x)=1$.
For any $b\in Y$, choose $c\in Z\setminus N_{F_{zy}}(b)$;
this is possible since $d_{F_{zy}}(b)\le w_{zy}(z)<|Z|$.
Every neighbour of $b$ in $\mathcal B$ must be adjacent to $c$
in $F_{xz}$. Since $w_{xz}(x)=1$, $c$ has at most one neighbour in $X$. Thus we know that $d_{\mathcal B}(b)\le1$, and hence $\widehat w(x)=1$.
The bound at $y$ follows symmetrically.

\emph{Case 2: $|Z|\ge w_{xz}(z)+w_{zy}(z)$ and at least one of
$\varepsilon_{xz},\varepsilon_{zy}$ is $1$.}
Here the existing forbidden colours suffice. Put $\widehat A(x)=A_{xz}(x)$
and $\widehat A(y)=A_{zy}(y)$. If $a\notin \widehat A(x)$ and
$b\notin \widehat A(y)$, Claim~\ref{clm:degree-bound} gives
\[
 |N_{F_{xz}}(a)\cup N_{F_{zy}}(b)|
 \le w_{xz}(z)-\varepsilon_{xz}+w_{zy}(z)-\varepsilon_{zy}
 <|Z|.
\]
The strict inequality holds because at least one of the two
values of $\varepsilon$ is $1$. Thus every pair in $\mathcal B$
uses a colour in $\widehat A(x)$ or $\widehat A(y)$. Take
\[
 \widehat F_{xy}=(\widehat A(x)\times Y)\cup(X\times \widehat A(y)),\qquad \widehat q=0,\qquad
 \widehat\varepsilon=1.
\]
The proof of Claim~\ref{clm:weight-bounds} gives
$\widehat w(x)=|A_{xz}(x)|+1\le w_{xz}(x)$, and similarly at $y$.
The old bounds on forbidden colours also give
$|\widehat A(x)|\le2\mu_{xz}/\sqrt r$ and
$|\widehat A(y)|\le2\mu_{zy}/\sqrt r$.
Choose
\[
 \nu=\max\left\{1,
   \left\lceil\frac{\sqrt r}{2}|\widehat A(x)|\right\rceil,
   \left\lceil\frac{\sqrt r}{2}|\widehat A(y)|\right\rceil\right\}.
\]
Since $\mu_{xz},\mu_{zy}$ are integers, these bounds imply
$1\le\nu\le\max\{1,\mu_{xz},\mu_{zy}\}$.
Moreover, $|\widehat A(x)|$, $|\widehat A(y)|\le2\nu/\sqrt r$,
so the multiplicity conditions hold with $\widehat q=0$.

\emph{Case 3: $|Z|=k$.}
We forbid the endpoint colours that have more than $2\sqrt r$
neighbours in the small relations. Claim~\ref{clm:degree-bound}
will then bound how many colours the remaining choices at $z$.
Let $R_{xz}=\bigcup_i Q_{xz,i}$ and $R_{zy}=\bigcup_i Q_{zy,i}$, and put
\begin{align*}
 T_x&=\{a\in X\setminus A_{xz}(x):d_{R_{xz}}(a)>2\sqrt r\},\\
 T_y&=\{b\in Y\setminus A_{zy}(y):d_{R_{zy}}(b)>2\sqrt r\}.
\end{align*}
Set $\widehat A(x)=A_{xz}(x)\cup T_x$, $\widehat A(y)=A_{zy}(y)\cup T_y$, and take
$\widehat F_{xy}=(\widehat A(x)\times Y)\cup(X\times \widehat A(y))$, with $\widehat q=0$ and
$\widehat\varepsilon=1$.
Since $\mu_{xz},\mu_{zy}\le r$, Claim~\ref{clm:degree-bound} shows that
any $a\notin \widehat A(x)$ and $b\notin \widehat A(y)$ each forbid
at most $4\sqrt r-1$ colours of $Z$. Hence
\[
 |N_{F_{xz}}(a)\cup N_{F_{zy}}(b)|\le8\sqrt r-2<k=|Z|.
\]
Thus $\widehat F_{xy}$ contains $\mathcal B$.

Since $|\widehat A(x)|=|A_{xz}(x)|+|T_x|$,
Claim~\ref{clm:weight-bounds} and its proof give the bounds
\[
 |\widehat A(x)|\le\frac{2\mu_{xz}}{\sqrt r},\qquad
 \widehat w(x)=|\widehat A(x)|+1\le w_{xz}(x).
\]
The bounds at $y$ follow in the same way.

Here $\widehat q=0$, so we need
$|\widehat A(x)|,|\widehat A(y)|\le2\nu/\sqrt r$. Same as Case 2, choose
$$ \nu=\max\left\{1,
   \left\lceil\frac{\sqrt r}{2}|\widehat A(x)|\right\rceil,
   \left\lceil\frac{\sqrt r}{2}|\widehat A(y)|\right\rceil\right\}.$$
Since $\mu_{xz},\mu_{zy}$ are integers, the bounds above give
$\nu\le\max\{1,\mu_{xz},\mu_{zy}\}$.
This completes the construction in all three cases.
\end{proof}

Fix a replacement constraint and $\nu$ as in
Claim~\ref{clm:replacement}. We now define $G'$ explicitly.

If $\mu_{xz}=\mu_{zy}=0$, then both edges $xz,zy$ exist and
$\nu=1$. In this case set $G'=G$; the vertex $z$ now lies
outside $S'$.
Otherwise, assume without loss of generality that $\mu_{xz}\ge\mu_{zy}$.
Then $\nu\le\mu_{xz}$. Keep $\nu$ two-edge $x$--$z$ paths
and one $z$--$y$ path of length one or two in $G$. Delete the internal
vertices of all unused two-edge paths between $x,z$ or $z,y$,
and any unused edges $xz,zy$. Contract the retained $z$--$y$
path to $y$, so that each retained path $xuz$ becomes $xuy$.
All other vertices and edges are retained.

Observe that the construction gives a $2$-connected minor $G'$ of $G$
containing $S'$, and that $V(G')\setminus S'$ is an independent set
of degree-two vertices. Moreover, $\mu'_{xy}=\mu_{xy}+\nu$ , $\delta'_{xy}=\delta_{xy}$, where the old values $\mu_{xy}$ and $\delta_{xy}$ are taken to be zero
if $xy\notin P(G,S)$. For all distinct $u,v\in S'$ with $uv\ne xy$,
we have $\mu'_{uv}=\mu_{uv}$ and $\delta'_{uv}=\delta_{uv}$.

Let $H'$ be the simple graph on $S'$ with edge set $P(G',S')$.
Define $\mathcal C'=(G',S',L|_{S'},\mathcal D')$ as follows.
At $xy$, set
\[
\begin{aligned}
 \mu'_{xy}&=\mu_{xy}+\nu, & \delta'_{xy}&=\delta_{xy},\\
 A'_{xy}(u)&=A_{xy}(u)\cup\widehat A(u)
     &&(u\in\{x,y\}),\\
 M'_{xy}&=M_{xy}, & q'_{xy}&=q_{xy}+\widehat q,\\
 \varepsilon'_{xy}&=\max\{\varepsilon_{xy},\widehat\varepsilon\}.
\end{aligned}
\]
Writing $\widehat Q_1,\ldots,\widehat Q_{\widehat q}$ for the
replacement small relations, set
\[
 Q'_{xy,i}=\begin{cases}
 Q_{xy,i},&1\le i\le q_{xy},\\
 \widehat Q_{i-q_{xy}},&q_{xy}<i\le q'_{xy}.
 \end{cases}
\]
If $xy\notin P(G,S)$, take the old sets and relation family to be
empty and $\mu_{xy}=\delta_{xy}=q_{xy}=\varepsilon_{xy}=0$.
The data at all other pairs in $P(G',S')$ are unchanged.
Thus $F'_{xy}=F_{xy}\cup\widehat F_{xy}$.

The pair conditions in Definition~\ref{def:valid} follow directly
from the definitions and Claim~\ref{clm:replacement}. Moreover,
\[
 w'_{xy}(x)\le w_{xy}(x)+\widehat w(x)
 \le w_{xy}(x)+w_{xz}(x),
\]
and similarly at $y$, where the old weight at $xy$ is taken
to be zero if $xy\notin P(G,S)$. Thus the total weight at each
remaining vertex does not increase. Since the lists are unchanged,
the list-size condition is preserved. Hence $\mathcal C'$ is
$(r,k)$-valid.

It remains to check the condition on cycles. Here we use that
$z$ was chosen outside the ends of the edge $e$.

\begin{claim}\label{clm:cycle-condition}
If $G'$ is a cycle, then some constraint of $\mathcal C'$
has $\varepsilon'=1$.
\end{claim}
\begin{proof}
Suppose $G'$ is a cycle and every constraint of
$\mathcal C'$ has $\varepsilon'=0$.
Claim~\ref{clm:replacement} then gives
$\varepsilon_{xz}=\varepsilon_{zy}=0$ and $\nu=1$.
Thus all old values are zero, so $G$ is not a cycle.
We show that $G'$ still has a vertex of degree at least three.

If $\mu_e+\delta_e\ge2$, write $e=uv$.
All the $u$--$v$ paths represented by $e$ are retained,
since $z\notin\{u,v\}$.
As $H$ is $2$-connected, $u$ has a neighbour other than $v$.
The corresponding path is retained if this neighbour is not $z$;
otherwise, it is replaced by the new two-edge path.
In either case, $u$ has an incident edge in $G'$ in addition to
those on the retained $u$--$v$ paths. Hence $d_{G'}(u)\ge3$,
so $G'$ is not a cycle.

Otherwise, the choice of $e$ gives $\mu_{e'}+\delta_{e'}=1$ for every
$e'\in P(G,S)$. Each edge of $H$ is then represented by exactly
one path of length one or two in $G$.
Let $P$ be the $x$--$y$ path obtained by joining the paths
corresponding to $xz$ and $zy$. Every vertex of $P$ other than
$x,y$ has degree two. Since $\nu=1$, our construction replaces
$P$ by a two-edge $x$--$y$ path. This preserves the degrees of
$x,y$ and leaves all vertices outside $P$ unchanged.
Thus every vertex of degree at least three in $G$ remains in
$G'$ with the same degree. Such a vertex exists because $G$ is
$2$-connected and is not a cycle. Hence $G'$ is not a cycle either, a contradiction.
\end{proof}

All hypotheses now hold for $\mathcal C'$, and $|S'|<|S|$.
By minimality, $\mathcal C'$ has a colouring. Its colours $a,b$
at $x,y$ avoid $\widehat F_{xy}$, and therefore avoid $\mathcal B$.
Choose $c\in Z\setminus\bigl(N_{F_{xz}}(a)\cup N_{F_{zy}}(b)\bigr)$ and
assign it to $z$. This satisfies both constraints at $z$.
Every original constraint not involving $z$ was retained in
$\mathcal C'$, so we have a colouring of $\mathcal C$,
a contradiction.
\end{proof}

\begin{proof}[Proof of Theorem~\ref{thm:upper}]
Put $k=\lceil8\sqrt r\rceil$ and let $(L,M)$ be a DP-cover of
$G\in\mathcal S_t$ with $|L(v)|\ge\min\{k,d_G(v)\}$ for every
vertex $v$.
Take $S=V(G)$ and use its matchings as the records, with
$q_{xy}=\varepsilon_{xy}=0$ and empty sets of forbidden colours.
As observed above, this is an $(r,k)$-valid tuple.
The graph has no $K_{2,2r+2}$ minor, since $t\le2r+2$,
and it is not a cycle. Theorem~\ref{thm:valid} gives an
$(L,M)$-colouring of $G$.
\end{proof}

\section*{Declarations}

\noindent\textbf{Funding}\quad
No funding was received for conducting this study.

\medskip
\noindent\textbf{Conflict of interest}\quad
The author declares no conflict of interest.

\end{document}